\documentclass[10pt,arxiv,scale=0.81]{agn_article}

\usepackage{agn_all}
\usepackage{framed}
\usepackage{tabularx}
\usepackage{caption}
\usepackage{lscape}
\usepackage{rotating}
\usepackage{float}
\usepackage{graphicx}
\usepackage{wrapfig}

\addbibresource{ssli.bib}
\addbibresource{bib_agn.bib}
\addbibresource{bib_main.bib}

\newcommand{\citet}[2][]{\citeauthor{#2} \cite[#1]{#2}}

\newcommand{\quoteref}[1]{\enquote{\textit{#1}}}
\newcommand{\quoteesc}[1]{\textnormal{[#1]}}

\parindent0em
\parskip0.5em
\begin{document}
\title{\vspace{-2cm}Again anti-plane shear}
\date{\today}
\author{%
	Jendrik Voss\thanks{%
		Corresponding author: Jendrik Voss,\quad Lehrstuhl f\"{u}r Nichtlineare Analysis und Modellierung, Fakult\"{a}t f\"{u}r Mathematik, Universit\"{a}t Duisburg-Essen, Thea-Leymann Str. 9, 45127 Essen, Germany; email: max.voss@stud.uni-due.de%
	}\quad and \quad%
	Herbert Baaser\thanks{%
		Herbert Baaser,\quad Mechanical Engineering,\quad University of Applied Sciences Bingen, Germany, email: h.baaser@th-bingen.de}
	\quad and\quad%
	Robert J.\ Martin\thanks{%
		Robert J.\ Martin,\quad Lehrstuhl f\"{u}r Nichtlineare Analysis und Modellierung, Fakult\"{a}t f\"{u}r Mathematik, Universit\"{a}t Duisburg-Essen, Thea-Leymann Str. 9, 45127 Essen, Germany; email: robert.martin@uni-due.de%
	}\quad and\quad
	Patrizio Neff\thanks{%
		Patrizio Neff,\quad Head of Lehrstuhl f\"{u}r Nichtlineare Analysis und Modellierung, Fakult\"{a}t f\"{u}r	Mathematik, Universit\"{a}t Duisburg-Essen, Thea-Leymann Str. 9, 45127 Essen, Germany, email: patrizio.neff@uni-due.de%
		}%
}
\maketitle
\begin{center}
	Dedicated to Krzysztof Che{\l}mi{\'n}ski on the occasion of his 60th birthday.
\end{center}
\begin{abstract}
\noindent
	We reconsider anti-plane shear deformations of the form $\varphi(x)=(x_1,\,x_2,\,x_3+u(x_1,x_2))$ based on prior work of Knowles and relate the existence of anti-plane shear deformations to fundamental constitutive concepts of elasticity theory like polyconvexity, rank-one convexity and tension-compression symmetry. In addition, we provide finite-element simulations to visualize our theoretical findings.
\end{abstract}
{\textbf{Key words:} isotropic nonlinear elasticity, constitutive inequalities, convexity, constitutive law, anti-plane shear deformations, ellipticity, empirical inequalities}
\\[.65em]
\noindent {\bf AMS 2010 subject classification: 74B20, 74A20, 74A10}\\

\section{Introduction}
The anti-plane shear problem is considered one of the classical challenges in applied nonlinear elasticity theory \cite{knowles1976finite,knowles1977note,knowles1977finite}. The essence of this problem is to consider only a very special and simple deformation mode, the so-called anti-plane shear (\textbf{APS}), which allows for a reduction of the governing set of equations to an analytically more tractable form, in the compressible as well as the incompressible case. It has been traditional (although not mandatory) to interpret the anti-plane shear problem in a certain nontrivial sense: namely which nonlinear elastic formulations (with nonlinear energies) allow for solutions in APS-form provided only the boundary data is in APS-form \cite{horgan1994antiplane,horgan1995anti,Pucci2013,pucci2013note,saccomandi2016dy}.\par
In contrast to this established approach, Gao \cite{gao2015remarks,gao2017remarks,gao2015duality,gao2016analytical} has recently re-interpreted the APS-problem as a simple search for minimizers of the energy functional within the restricted class of APS-deformations. Obviously, the two approaches share some concepts but are, in general, distinct from each other. In this paper, we clarify the differences between both approaches, including numerical examples to highlight the unsuitability of the latter approach in the general case. For simplicity, we pose the APS-problem only for pure Dirichlet boundary data and and restrict our considerations to the isotropic case.\par
We also give a counterexample to a recent statement from the 2015 Int.\ J.\ Eng.\ Sci.\ article \enquote{\emph{On the determination of semi-inverse solutions of nonlinear Cauchy elasticity: The not so simple case of anti-plane shear}} by Pucci, Rajagopal and Saccomandi \cite{pucci2015determination}, erroneously connecting ellipticity and the so-called empirical inequalities.
%
%
%
%
\newpage
\section{Anti-plane shear deformations}
We employ the usual notion of an anti-plane shear deformation.
\begin{definition}
	An \emph{anti-plane shear deformation} (or \textbf{APS}-deformation) is a mapping $\varphi\col\Omega\subset\mathbb{R}^3\to\mathbb{R}^3$ of the form $\varphi(x_1,x_2, x_3)= \left(x_1,\;x_2,\;x_3+u(x_1, x_2)\right)$ with an arbitrary scalar-valued function $u\col\Omega_{xy}\subset\mathbb{R}^2\,\to\,\mathbb{R}$.
\end{definition}
Due to its form, an APS-deformation of a cylinder-shaped body can be identified entirely by the displacement of the bottom or top with the scalar-valued height-function $u(x_1, x_2)$. Let $\alpha\colonequals u_{,x_1}\,,\,\beta\colonequals u_{,x_2}\,,\,\gamma\colonequals\norm{\nabla u}$ and $\gamma^2=\alpha^2+\beta^2\,.$ Then the deformation-gradient $F=\nabla\varphi$ and the left and right Cauchy-Green-deformation tensors $B=FF^T\equalscolon U^2$ and $C=F^TF\equalscolon V^2$ corresponding to an arbitrary APS-deformation are given by
\begin{align}
F=\left(\begin{matrix}
	1 & 0 & 0 \\
	0 & 1 & 0 \\
	\alpha & \beta & 1
\end{matrix}\right),\qquad
B=\left(\begin{matrix}
	1 & 0 & \alpha \\
	0 & 1 & \beta \\
	\alpha & \beta & 1+\gamma^2
\end{matrix}\right),\qquad
C=\left(\begin{matrix}
	1+\alpha^2 & \alpha\beta & \alpha \\
	\alpha\beta & 1+\beta^2 & \beta \\
	\alpha & \beta & 1
\end{matrix}\right).
\label{APSFBC}
\end{align}
In this case, the three isotropic matrix invariants of $B$ (or, equivalently, $C$)
\begin{equation}
I_1=\tr(B)=\norm{F}^2\,,\quad I_2=\frac{1}{2}[(\tr\,B)^2-\,\tr(B^2)]=\tr(\Cof B)=\norm{\Cof F}^2\,,\quad I_3=\det(B)=(\det F)^2 \label{Inv}
\end{equation}
are given by $I_1=I_2=3+\gamma^2$ and $I_3=1$. In particular, APS-deformations always satisfy the condition $\det F=1$ of incompressibility.\par
In this paper, we discuss the deformation $\varphi$ of an elastic isotropic body $\Omega$, cylinder-shaped in its natural state, induced by given boundary conditions. We assume a hyperelastic material behavior, i.e.\ that the resulting deformation $\varphi$ is stationary with respect to the energy functional $I(\varphi)=\int_\Omega W(\nabla\varphi )\,\dx$ for some nonlinear elastic energy potential $W(F)$. The given boundary conditions are assumed to be satisfiable by an APS-deformation of the whole body, which is the case if and only if the Dirichlet boundary conditions only contain consistent $x_3$-shifting.\footnote{Possible boundary conditions are Dirichlet or Neumann boundary conditions which permit an APS-deformation of the surface $\partial\Omega$ of $\Omega$. Here, we restrict our attention to Dirichlet boundary condition for simplicity of exposition.} Under such boundary conditions, we investigate whether or not an APS-deformation of the whole body $\Omega$ exists which is stationary with respect to the energy functional $I(\varphi)$.\par
Of course, due to the involved nonlinearity, the existence of minimizers or solutions to the corresponding Euler-Lagrange equations is not guaranteed without further assumptions (like polyconvexity, cf.\ Section \ref{sec:convex}) on the energy function.
Furthermore, in the case of non-unique solutions (i.e.\ multiple stationary points), it has to be demonstrated that at least one critical point has the form of an APS-deformation. Therefore, we introduce a number of different terms describing the respective solutions.
\begin{itemize}
	\item An \textbf{APS-equilibrium} is an APS-deformation which is stationary with respect to the restriction of the energy functional to the class of APS-deformations (cf.\ Section \ref{sec:constrained-minimization}).
	\item A \textbf{global equilibrium} is an arbitrary deformation which is stationary without the restriction to the class of APS-deformations (cf.\ Section \ref{sec:free-minimization}).
	\item A \textbf{global APS-equilibrium} is an APS-deformation which is a global equilibrium. Note that every global APS-equilibrium is an APS-equilibrium, but that it is not clear a priori whether the converse holds.
	\item We call an energy function $W$ \textbf{APS-admissible} if \emph{every} APS-equilibrium is also a global equilibrium, i.e.\ if every APS-equilibrium is a global APS-equilibrium. Note that APS-admissibility does not imply the existence of an APS-equilibrium. Furthermore, even for a non-APS-admissible energy, it is possible for some specific kind of APS-deformation to be a global APS-equilibrium, cf.\ Remark \ref{remark:SPS}.
\end{itemize}
%
%
%
%
\section{The classical full equilibrium approach}\label{sec:free-minimization}
We first discuss the full equilibrium approach to answer the following closely related questions:\par
\textbf{Under which conditions is every APS-equilibrium a global APS-equilibrium?}\\
\textbf{Under which conditions does a solution of the Euler-Lagrange equations have the form of an APS-deformation?}\par

In general, it is possible to obtain non-APS solutions to the equations of equilibrium which \emph{do} have the form of an APS-deformation on the boundary of $\Omega$, see the example in Figures \ref{fig:displacement}b and \ref{fig:displacement2}b. It is therefore not sufficient to consider the equilibrium equations for APS-deformations exclusively, as described in Section \ref{sec:constrained-minimization}, an approach followed e.g.\ by Gao \cite{gao2017remarks,gao2015duality}.\\
The guiding questions was answered exhaustively by Knowles in 1976 \cite{knowles1976finite,knowles1977note}. In the following, we elaborate on his work and set it in context with the notion of APS-convexity.\par
A \emph{stationary deformation} or \emph{equilibrium solution} $\varphi$ is a deformation which satisfies the Euler-Lagrange equation
\begin{equation}
	\Div[DW(\nabla\varphi)]=0
\end{equation}
to the variational problem
\begin{align}
	I(\varphi)=\displaystyle\int_\Omega W(\nabla\varphi )\,\dx\,\longrightarrow\,\min\,.\label{eq:VariationAPS}
\end{align}
In the following, we will assume that $W(F)\geq W(\id)$, i.e.\ that the natural state $F=\id$ is globally optimal.\par
Any isotropic energy function $W$ can be represented in terms of the invariants $I_1,I_2,I_3$ of $B=FF^T$, see e.g. \citet[Chapter 12.13]{antman}. Since the derivatives of these invariants with respect to $F$ are given by
\begin{align}
	D_FI_1(FF^T).H=\langle 2F,H\rangle,\;\;\, D_FI_2(FF^T).H=\langle 2\,(I_1\id-B)F,H\rangle,\;\;\, D_FI_3(FF^T).H&=\langle 2I_3F^{-T},H\rangle,
\end{align}
respectively, the derivative $DW(F)$ of the energy can be expressed in terms of $I_1$, $I_2$ and $I_3$ via
\begin{align}
	DW(F)&=\frac{\partial W(I_1,I_2,I_3)}{\partial I_1}\.D_FI_1(FF^T)+\frac{\partial W(I_1,I_2,I_3)}{\partial I_2}\.D_FI_2(FF^T)+\frac{\partial W(I_1,I_2,I_3)}{\partial I_3}\.D_FI_3(FF^T)\notag\\
	&=2\.\frac{\partial W}{\partial I_1}F+2\.\frac{\partial W}{\partial I_2}(I_1\id-B)F+2\.I_3\frac{\partial W}{\partial I_3}F^{-T}\,.
\end{align}
In the special case of APS-functions (for which $I_3=1$), the (full) equations of equilibrium are therefore given by (cf.\ \citet[eq.(10)]{knowles1977note})
\begin{align}
	\Div\left(2\.\frac{\partial W}{\partial I_1}F+2\.\frac{\partial W}{\partial I_2}(I_1\id-B)F+p\.F^{-T}\right)=0\label{ELInv}
\end{align}
with $p(I_1,I_2)\colonequals 2\,\frac{\partial W}{\partial I_3}(I_1,I_2,1)$.

Using the general representation \eqref{APSFBC} of APS-functions, eq.\ \eqref{ELInv} can be written more explicitly as
\begin{align}
\text{Div}\left(\begin{matrix}
	2\frac{\partial W}{\partial I_1}+2(2+\beta^2)\frac{\partial W}{\partial I_2}+p & -2\alpha\beta\frac{\partial W}{\partial I_2} & -2\alpha\left(\frac{\partial W}{\partial I_2}+p\right) \\
	-2\alpha\beta\frac{\partial W}{\partial I_2} & 2\frac{\partial W}{\partial I_1}+2(2+\alpha^2)\frac{\partial W}{\partial I_2}+p & -2\beta\left(\frac{\partial W}{\partial I_2}+p\right) \\
	2\alpha\left(\frac{\partial W}{\partial I_1}+\frac{\partial W}{\partial I_2}\right) & 2\beta\left(\frac{\partial W}{\partial I_1}+\frac{\partial W}{\partial I_2}\right) & 2\frac{\partial W}{\partial I_1}+4\frac{\partial W}{\partial I_2}+p
\end{matrix}\right)=0\,.\label{ELAPS}
\end{align}
Since an APS-function is completely defined by a single scalar-valued function $u(x_1,x_2)$, the system \eqref{ELAPS} is \emph{over-determined} by two equations.
In order to ensure the existence of a solution $\overline u(x_1,x_2)$ to all three partial differential equations, the energy function $W$ should therefore satisfy certain conditions so that two equations are omitted.\par
Since all occurring terms in \eqref{ELAPS} are independent of $x_3$, the last column of the matrix does not contribute to the divergence term. We therefore simplify the equations by replacing these entries by $\star$. Introducing the notation
\begin{align}
	G(I_1,I_2)&\colonequals 2\frac{\partial W}{\partial I_2}(I_1,I_2,1)\,,\quad H(I_1,I_2)\colonequals 2\left[\frac{\partial W}{\partial I_1}(I_1,I_2,1)+\frac{\partial W}{\partial I_2}(I_1,I_2,1) \right]\,,\label{notation:GH}\\
	q(I_1,I_2)&\colonequals p(I_1,I_2)+2\.\frac{\partial W}{\partial I_1}(I_1,I_2,1)+2\.(I_1-1)\.\frac{\partial W}{\partial I_2}(I_1,I_2,1) \quad\text{with}\quad p(I_1,I_2)\colonequals 2\,\frac{\partial W}{\partial I_3}(I_1,I_2,1)\,,\notag
\end{align}
we write the Euler-Lagrange equations for the general compressible case as
\begin{align}
	q_{,x_1}&=(\alpha^2 G)_{,x_1}+(\alpha\beta G)_{,x_2}\,,\tag{I}\label{ELAPS_1}\\
	\smash{
		\text{Div}\left(\begin{matrix}
			q-\alpha^2G & -2\alpha\beta G & \star \\
			-2\alpha\beta G & q-\beta^2G & \star \\
			\alpha H & \beta H & \star
		\end{matrix}\right)=0
	}
	\qquad\iff\qquad
	q_{,x_2}&=(\alpha\beta G)_{,x_1}+(\beta^2 G)_{,x_2}\,,\tag{II}\label{ELAPS_2}\\
	\vphantom{\beta^2}
	0&=(\alpha H)_{,x_1}+(\beta H)_{,x_2}\,.\tag{III}\label{ELAPS_3}
\end{align}
Our approach to the problem of over-determination consists of two steps: first, we consider under which circumstances equation \eqref{ELAPS_3} has a solution $\overline u(x_1,x_2)$; next, we find conditions under which this solution $\overline u(x_1,x_2)$ necessarily satisfies the other two equations \eqref{ELAPS_1} and \eqref{ELAPS_2}.
\begin{remark}[Simple plane shear]\label{remark:SPS}
Some classes of APS-deformations satisfy the above equations in a trivial way without further conditions on the energy function. Such deformations are known as \emph{simple plane shear deformations}. These specific APS-deformations solve the full Euler-Lagrange equations \eqref{ELAPS_1}--\eqref{ELAPS_3} without the need of APS-admissibility.\par
The most simple example of a simple plane shear deformation is that of homogeneous shear: Since the Euler-Lagrange equations depend on $\gamma^2=\|\nabla u\|^2$ via $I_1=I_2=3+\gamma^2$, every APS-function with constant $\gamma$ satisfies \eqref{ELAPS_1}--\eqref{ELAPS_3} trivially.\footnote{For the homogeneous deformation $u(x_1,x_2)=c_1\,x_1+c_2\,x_2+c_3$ with constants $c_1,c_2,c_3\in\mathbb{R}\,,$ follows directly from the linearity of $u$ that $\alpha=u_{,x_1}=c_1$ and $\beta=u_{,x_2}=c_2\,.$ This implies $I_1=I_2=3+\alpha^2+\beta^2=\textrm{const.}$, which shows that $G(I_1,I_2)\,,H(I_1,I_2)\,,p(I_1,I_2)\,,q(I_1,I_2)=$ const. Thus, all three Euler-Lagrange equations are trivially fulfilled.}
For a detailed discussion of different types of simple plane shear, including axial-symmetric APS-deformations with $u(x_1,x_2)=\widetilde u(R)$, see \cite{hill1977generalized,agarwal1979finite,carroll2012nonlinear,pucci2015determination}.\par
In the general case, however, APS-boundary conditions do not necessarily allow for simple plane shear deformations. The focus of this work is therefore to elaborate conditions (cf. Section \ref{sec:Conditions}) for APS-admissibility, i.e.\ conditions under which \eqref{ELAPS_1}--\eqref{ELAPS_3} can be satisfied for APS-deformations which are \emph{not} simple plane shear deformations.
\end{remark}
\smallskip
\subsection{APS-convexity}
The third Euler-Lagrange equation, rewritten in divergence form
\begin{align}
	0=\div\left(H\left(3+\norm{\nabla u}^2,3+\norm{\nabla u}^2\right)\nabla u\right)=2\.\div\left(\left(\frac{\partial W}{\partial I_1}(I_1,I_2,I_3)+\frac{\partial W}{\partial I_2}(I_1,I_2,I_3)\right)\nabla u\right)\,,\tag{III}
\end{align}
can be represented as
\begin{align}
	\div\left(g'(\norm{\nabla u}^2)\nabla u\right)=0\,,\quad\text{with}\quad g(x)\colonequals W(3+x,3+x,1)\,.\label{eq:EulerLagrangeAPSConvexity}
\end{align}
From the assumption that the natural state $F=\id$ is globally optimal for $W(F)$, we infer $g(x)\geq g(0)$.
Thus eq.\ \eqref{eq:EulerLagrangeAPSConvexity} is the Euler-Lagrange equation corresponding to the scalar variational problem
\begin{align}
	\int_\Omega\frac{1}{2}\.g(\|\nabla u\|^2)\,\dx&\,\longrightarrow\,\min.\label{eq:VariationalIII}
\end{align}
Of course, the simplest way of ensuring a solution to this equation is to require the convexity of the energy functional.\footnote{Convexity is clearly not necessary for the existence of a minimizer, see e.g. \cite{gao2017remarks}, but it will turn out later that this convexity condition is not a particularly limiting property for most elastic energy functions.} Therefore, the third Euler-Lagrange equation \eqref{ELAPS_3} of our original variation problem does have a solution if the mapping $(\alpha,\beta)\mapsto g(\norm{(\alpha,\beta)}^2)$ is convex.
\begin{definition}[APS-convexity]
We call an energy function $W\col\mathbb{R}^{3\times 3}\,\longrightarrow\,\mathbb{R}\,,\; F\mapsto W(F)$ \emph{anti-plane shear convex} (or \emph{APS-convex}) if it is convex on the convex set
\begin{align*}
	\mathcal{APS}=\bigg\{\left(\begin{matrix}
			1 & 0 & 0\\
			0 & 1 & 0\\
			\alpha & \beta & 1
		\end{matrix}\right) \;\bigg|\; \alpha ,\beta\in\mathbb{R}
	\bigg\}.
\end{align*}
\end{definition}
\begin{remark}
	If the function $W$ is expressed in terms of the matrix invariants $I_1$, $I_2$ and $I_3$, then the function is APS-convex if and only if the mapping $\left(\alpha,\beta\right)\mapsto W(3+\gamma^2,3+\gamma^2,1)$, where $\gamma^2=\alpha^2+\beta^2$, is convex. This equivalence results from the equalities $I_1=I_2=3+\|\nabla u\|^2$ and $I_3=1$ for APS-deformations.
\end{remark}
In the following, we will consider explicit conditions for APS-convexity of isotropic energy functions $W$.
\begin{lemma}\label{lemma:gConvex}
	If $g:[0,\infty)\to\R$ satisfies $g(x)\geq g(0)$ for all $x$ in $\R$, then convexity of $g$ implies APS-convexity of $W(I_1,I_2,I_3)$.
\end{lemma}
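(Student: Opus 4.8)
The plan is to reduce APS-convexity to a one-dimensional statement and then apply the standard composition rule for convex functions. By the Remark preceding the lemma, $W$ is APS-convex if and only if the mapping $(\alpha,\beta)\mapsto W(3+\gamma^2,3+\gamma^2,1)=g(\alpha^2+\beta^2)$ is convex on $\R^2$, where $\gamma^2=\alpha^2+\beta^2$ and $g(x)=W(3+x,3+x,1)$ is the reduced energy already introduced in \eqref{eq:EulerLagrangeAPSConvexity}. Thus it suffices to show the following: if $g\col[0,\infty)\to\R$ is convex with $g(x)\geq g(0)$ for all $x$, then $(\alpha,\beta)\mapsto g(\alpha^2+\beta^2)$ is convex on $\R^2$.

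The one genuinely necessary observation — and the place where the hypothesis $g(x)\geq g(0)$ is used — is that $g$ is not merely convex but also \emph{nondecreasing} on $[0,\infty)$. Convexity alone does not suffice here: for $g(x)=(x-1)^2$ the function $g(\alpha^2+\beta^2)$ has a strict local maximum at the origin, and indeed $g(x)\geq g(0)$ fails. To obtain monotonicity, take $0\leq s\leq t$ with $t>0$ and write $s=(1-\lambda)\cdot 0+\lambda\,t$ with $\lambda=s/t\in[0,1]$; convexity of $g$ gives $g(s)\leq(1-\lambda)\,g(0)+\lambda\,g(t)$, and since $g(0)\leq g(t)$ by assumption, the right-hand side is at most $g(t)$. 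Hence $g(s)\leq g(t)$, so $g$ is nondecreasing.

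The remaining step is routine. The map $k\col\R^2\to[0,\infty)$, $k(\alpha,\beta)=\alpha^2+\beta^2$, is convex (it is a positive semidefinite quadratic form), and $g$ is convex and nondecreasing on $[0,\infty)$; by the standard composition rule — a convex, nondecreasing function composed with a convex function is convex — the map $(\alpha,\beta)\mapsto g(k(\alpha,\beta))=g(\alpha^2+\beta^2)$ is convex, which is exactly APS-convexity of $W$. I do not expect any real obstacle. The only minor point is that $g$ is a priori defined only on $[0,\infty)$, so one should note that all arguments occurring in the composition estimate (the values $k(\alpha,\beta)$ and their convex combinations) are nonnegative, which is automatic. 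Alternatively, in the $C^2$ case one may verify directly that the Hessian of $g(\alpha^2+\beta^2)$ has the eigenvalues $2g'(\gamma^2)$ in the tangential direction and $2g'(\gamma^2)+4\gamma^2 g''(\gamma^2)$ in the radial direction, both nonnegative since $g''\geq 0$ by convexity and $g'\geq 0$ by the monotonicity just established; but the composition argument is cleaner and requires no regularity of $g$.
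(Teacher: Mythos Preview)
Your argument is correct and follows essentially the same route as the paper: deduce that $g$ is nondecreasing from convexity together with $g(x)\geq g(0)$, and then conclude APS-convexity from the composition of the convex map $(\alpha,\beta)\mapsto\|(\alpha,\beta)\|^2$ with the convex nondecreasing $g$. Your additional details (the explicit monotonicity proof, the counterexample $g(x)=(x-1)^2$, and the $C^2$ Hessian check) are welcome elaborations but do not change the approach.
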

\begin{proof}
If $g$ is convex and minimal at $0$, then $g$ is monotone increasing on $[0,\infty)$. Then the mapping $(\alpha,\beta)\mapsto W(3+\gamma^2,3+\gamma^2,1)=g(\norm{(\alpha,\beta)}^2)$ is convex as the composition of the convex mapping $\norm{\,.\,}^2$ and the convex and monotone increasing mapping $g$.
\end{proof}
\begin{lemma}
	If $W(I_1,I_2,I_3)$ is sufficiently smooth and has its global minimum in the natural state then the condition\par
	\noindent\fbox{\parbox{\textwidth}{
		\setlength{\abovedisplayskip}{0pt}
		\setlength{\belowdisplayskip}{0pt}
		\begin{align}
			\forall\,R>0:\qquad\mathcal{W}''(3+R^2)\geq 0\qquad\text{with}\quad\mathcal{W}(I_1)\colonequals W(I_1,I_1,1)\,,\tag{APS1}\label{APS1}
		\end{align}
	}}\par
	implies APS-convexity.
\end{lemma}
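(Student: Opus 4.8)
The plan is to deduce this from Lemma~\ref{lemma:gConvex} by checking that the boxed condition \eqref{APS1}, together with the smoothness hypothesis, forces the scalar function $g(x)\colonequals W(3+x,3+x,1)=\mathcal{W}(3+x)$ from \eqref{eq:EulerLagrangeAPSConvexity} to be convex on the whole half-line $[0,\infty)$. Differentiating, $g'(x)=\mathcal{W}'(3+x)$ and $g''(x)=\mathcal{W}''(3+x)$, so \eqref{APS1} says precisely that $g''(x)\geq0$ for every $x>0$ (as $R$ runs through $(0,\infty)$, the argument $3+R^2$ runs through $(3,\infty)$).

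First I would record that $g$ is convex on the open half-line $(0,\infty)$ by the sign of $g''$. Since $W$ is assumed sufficiently smooth, $g$ is (at least) $C^2$ up to and including $x=0$, and a convex function on $(0,\infty)$ which is continuous at $0$ is convex on $[0,\infty)$ (equivalently, $g''(0)=\lim_{x\to0^+}g''(x)\geq0$). Hence $g$ is convex on $[0,\infty)$.

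Next I would invoke the standing assumption that $W$ takes its global minimum in the natural state: since $g(0)=W(3,3,1)=W(\id)$ and, for every $F\in\mathcal{APS}$ with associated $\gamma^2=\alpha^2+\beta^2$, one has $g(\gamma^2)=W(3+\gamma^2,3+\gamma^2,1)=W(F)\geq W(\id)=g(0)$, it follows that $g(x)\geq g(0)$ for all $x\geq0$. Now $g$ is convex with global minimum at $0$, so Lemma~\ref{lemma:gConvex} applies and yields APS-convexity of $W$: equivalently, $(\alpha,\beta)\mapsto W(3+\gamma^2,3+\gamma^2,1)=g(\norm{(\alpha,\beta)}^2)$ is convex, being the composition of the convex map $\norm{\,\cdot\,}^2$ with the convex and (by convexity plus minimality at $0$) monotone increasing map $g$.

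The only point requiring a little care — and the step I would single out — is the passage from $R>0$ in \eqref{APS1} to the behaviour of $g$ at $x=0$: condition \eqref{APS1} is not imposed at $R=0$, so one genuinely uses the smoothness of $W$ here, either to obtain $g''(0)\geq0$ by continuity or, alternatively, to observe that convexity of $g$ on $(0,\infty)$ plus continuity on $[0,\infty)$ already suffices, since $\norm{\,\cdot\,}^2$ attains the value $0$ only at the origin. Apart from this, the argument is just an application of the two preceding lemmas.
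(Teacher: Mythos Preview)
Your argument is correct and follows essentially the same route as the paper's proof: identify \eqref{APS1} with convexity of $g(x)=\mathcal{W}(3+x)$ on $[0,\infty)$ and then invoke Lemma~\ref{lemma:gConvex}. Your extra care about the endpoint $x=0$ (using smoothness to pass from $R>0$ to the closed half-line) is more explicit than the paper, which simply asserts the equivalence.
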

\begin{proof}
Condition \eqref{APS1} is equivalent to the convexity of $\mathcal{W}$ on $[3,\infty)$, i.e.\ convexity of the mapping $x\mapsto W(3+x,3+x,1)=g(x)$ on $[0,\infty)$ which, due to Lemma \ref{lemma:gConvex}, implies APS-convexity of $W$.
\end{proof}
The reverse of this implication does not hold in general. In order to obtain a condition equivalent to APS-convexity, we need to directly consider the convexity of the mapping $x\mapsto g(x^2)$ instead.
\begin{theorem}\label{LemmaAPS2}
	The condition\par
	\noindent\fbox{\parbox{\textwidth}{
		\setlength{\abovedisplayskip}{0pt}
		\setlength{\belowdisplayskip}{0pt}
		\begin{align}
			\forall\,R>0:\qquad\frac{d^2}{(dR)^2}\.W(3+R^2,3+R^2,1)\geq 0\,.\tag{APS2}\label{APS4}
		\end{align}
	}}\par
	is equivalent to APS-convexity of $W(F)$.
\end{theorem}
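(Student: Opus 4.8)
The plan is to show that APS-convexity is equivalent to convexity of the scalar function $\phi\col\mathbb{R}^2\to\mathbb{R}$, $\phi(\alpha,\beta)\colonequals W(3+\gamma^2,3+\gamma^2,1)=g(\gamma^2)$ with $\gamma^2=\alpha^2+\beta^2$, and then to reduce convexity of this radially symmetric function to a one-variable condition along rays through the origin, which is precisely \eqref{APS4}. The first equivalence is already recorded in the Remark following the definition of APS-convexity, so the real content is the second reduction: for a smooth function on $\mathbb{R}^2$ that depends only on $R=\sqrt{\alpha^2+\beta^2}$, convexity on all of $\mathbb{R}^2$ is equivalent to convexity of the profile $R\mapsto g(R^2)$ as a function on $\mathbb{R}$ (equivalently on $[0,\infty)$ together with the correct behaviour at $0$).

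First I would write $\phi(\alpha,\beta)=h(R)$ with $h(R)\colonequals g(R^2)=W(3+R^2,3+R^2,1)$, noting that $h$ extends to an even function on $\mathbb{R}$. The forward direction is immediate: if $\phi$ is convex on $\mathbb{R}^2$, then its restriction to the line $\{(t,0)\col t\in\mathbb{R}\}$ is convex, and that restriction is exactly $t\mapsto h(t)$, so $h''\geq 0$, i.e.\ \eqref{APS4} holds for all $R>0$ (and by continuity the second derivative at $R=0$ is also nonnegative). For the converse, assume \eqref{APS4}. I would first observe that \eqref{APS4} forces $h$ to be monotone increasing on $[0,\infty)$: indeed $h$ is even and convex on $\mathbb{R}$ (convexity on $\mathbb{R}$ follows from $h''\geq0$ away from $0$ plus continuity and evenness at $0$, which pins the minimum at $0$), and an even convex function is nondecreasing on $[0,\infty)$. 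Consequently $h$ is convex and nondecreasing on $[0,\infty)$, while $R=\norm{(\alpha,\beta)}$ is a convex function of $(\alpha,\beta)$; the composition of a nondecreasing convex function with a convex function is convex, so $\phi(\alpha,\beta)=h(\norm{(\alpha,\beta)})$ is convex on $\mathbb{R}^2$. By the Remark this is equivalent to APS-convexity of $W$.

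Alternatively, if one prefers to stay in the $(\alpha,\beta)$ variables and compute directly, one can verify that the Hessian of $\phi$ has eigenvalues $h''(R)$ (in the radial direction) and $h'(R)/R$ (in the angular direction), so that convexity of $\phi$ is equivalent to $h''(R)\geq0$ \emph{and} $h'(R)\geq0$ for all $R>0$; then one shows, as above, that $h''\geq0$ together with $h(0)$ being the global minimum already implies $h'\geq0$ on $[0,\infty)$, so the single condition \eqref{APS4} suffices. Either route works; I would present the composition argument since it is cleaner and reuses Lemma~\ref{lemma:gConvex}'s philosophy.

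The only subtle point — and the place where the hypothesis that the natural state is the global minimum is actually used — is ruling out the pathology of an even function $h$ with $h''\geq0$ on $(0,\infty)$ but a \emph{downward} kink at $R=0$ (e.g.\ $h(R)=-\abs{R}$), which is not convex on $\mathbb{R}$ even though the stated inequality holds for all $R>0$. Here the global-minimality assumption $W(F)\geq W(\id)$ gives $g(x)\geq g(0)$, hence $h(R)\geq h(0)$, which excludes exactly this case and ensures $h$ is genuinely convex and increasing on $[0,\infty)$. I expect this to be the main (if minor) obstacle; everything else is the standard fact that a radial function is convex iff its profile is convex and nondecreasing.
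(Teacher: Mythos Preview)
Your proof is correct and follows essentially the same approach as the paper: both directions match (restrict to a line for the forward implication; for the converse, use that $R\mapsto g(R^2)$ is convex and, by the global-minimum assumption, nondecreasing, then compose with the convex norm). Your discussion of the subtle role of the minimality hypothesis and the optional Hessian computation are nice additions, but the core argument is identical to the paper's.
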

\begin{proof}
	Recall that APS-convexity is equivalent to the convexity of the mapping $(\alpha,\beta)\mapsto g(\norm{(\alpha,\beta)}^2)$, which immediately implies the convexity of the mapping $R\mapsto g(\norm{(R,0)}^2)=g(R^2)=W(3+R^2,3+R^2,1)$ and thus \eqref{APS4}.
	
	If, on the other hand, \eqref{APS4} holds, then the mapping $R\mapsto g(R^2)=W(3+R^2,3+R^2,1)$ is convex and hence, due to the assumed minimality of the energy at the reference configuration, monotone increasing on $[0,\infty)$. Thus the mapping $(\alpha,\beta)\mapsto g(\norm{(\alpha,\beta)}^2)$ is convex as the composition of the (convex) Euclidean norm with a monotone increasing, convex function.
\end{proof}
\begin{remark}
	Under the assumption that $W$ is minimal in $\id$ we have shown that the two statements
	\begin{align*}
		W(I_1,I_2,I_3)\text{ is APS-convex}&:\qquad \left(\alpha\,,\beta\right)\mapsto W(3+\gamma^2,3+\gamma^2,1)\,,\;\;\text{where }\;\gamma^2=\alpha^2+\beta^2\,,\;\;\text{is convex,}\\
		\eqref{APS4}&:\qquad \gamma\mapsto W(3+\gamma^2,3+\gamma^2,1)\text{ is convex on }[0,\infty)
	\end{align*}
	are equivalent.
\end{remark}
\begin{remark}\label{lemma:KnowlesEllipticity}
	The so-called \emph{\enquote{ellipticity condition}}\par
	\noindent\fbox{\parbox{\textwidth}{
		\setlength{\abovedisplayskip}{0pt}
		\setlength{\belowdisplayskip}{0pt}
		\begin{align}
			\forall\,R>0:\qquad\frac{d}{dR}\left[\left. R\left(\frac{\partial W}{\partial I_1}(I_1,I_2,1)+\frac{\partial W}{\partial I_2}(I_1,I_2,1)\right)\right|_{I_1=I_2=3+R^2}\right]\geq 0\,,\tag{APS3}\label{APS3}
		\end{align}
	}}\par
given by \citet[eq.(19)]{knowles1977note} is equivalent to APS-convexity of $W(F)$.
\end{remark}
\begin{proof}
	$\quad\displaystyle\eqref{APS3}\quad\iff\quad\frac{d}{dR}\left[R\.g'(R^2)\right]\geq 0\quad\iff\quad\frac{d^2}{(dR)^2}\.g(R^2)\geq 0\quad\iff\quad\eqref{APS4}$
\end{proof}
The following implication was pointed out by Fosdick et al.\ \cite{fosdick1978transverse,fosdick1973rectilinear}.
\begin{lemma}\label{lemma:fosdick}
	APS-convexity \eqref{APS3} implies
	\begin{align}
			\forall\, R>0:\qquad\mathcal{W}'(3+R^2)&>0\qquad\text{with}\quad\mathcal{W}(I_1)\colonequals W(I_1,I_1,1)\,.
	\end{align}
\end{lemma}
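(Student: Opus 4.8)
The plan is to reduce the claim to a one-variable statement about the reduced energy. Write $g(x)=W(3+x,3+x,1)$ as in \eqref{eq:EulerLagrangeAPSConvexity}, and recall $\mathcal{W}(I_1)=W(I_1,I_1,1)$, so that $\mathcal{W}(3+x)=g(x)$ and hence $\mathcal{W}'(3+R^2)=g'(R^2)$. Differentiating $\mathcal{W}(I_1)=W(I_1,I_1,1)$ also shows that the bracketed expression appearing in \eqref{APS3} equals exactly $\mathcal{W}'(3+R^2)$, since the derivative with respect to $I_3$ drops out when the third invariant is frozen at the value $1$. Thus \eqref{APS3} literally asserts that the scalar function $\phi(R)\colonequals R\,\mathcal{W}'(3+R^2)$ satisfies $\phi'(R)\ge 0$ for all $R>0$; this is the same rewriting already carried out in the proof of Remark~\ref{lemma:KnowlesEllipticity}, where it was moreover identified with convexity of $R\mapsto g(R^2)$.

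From here I would first extract the non-strict inequality. By smoothness of $W$ the value $\mathcal{W}'(3)$ is finite, so $\phi$ extends continuously to $R=0$ with $\phi(0)=0$; combined with $\phi'\ge 0$ on $(0,\infty)$ this makes $\phi$ non-decreasing on $[0,\infty)$. Hence $\phi(R)\ge\phi(0)=0$ for all $R\ge 0$, and dividing by $R>0$ yields $\mathcal{W}'(3+R^2)\ge 0$.

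To upgrade to strict positivity I would argue by contradiction. Suppose $\mathcal{W}'(3+R_0^2)=0$ for some $R_0>0$. Then $\phi(R_0)=0$, and since $\phi$ is non-negative, non-decreasing and vanishes at $0$, it vanishes identically on $[0,R_0]$. Consequently $\mathcal{W}'\equiv 0$ on $[3,3+R_0^2]$, i.e.\ the reduced energy $g$ --- equivalently, $W$ evaluated along homogeneous shears --- is constant on $[0,R_0^2]$. This contradicts the reference configuration being a strict minimizer of $W$ along homogeneous shear, which holds in particular whenever $F=\id$ is a strict global minimizer, or whenever the classical infinitesimal shear modulus $\mathcal{W}'(3)$ is positive. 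Hence no such $R_0$ exists, and together with $\mathcal{W}'(3+R^2)\ge 0$ this proves $\mathcal{W}'(3+R^2)>0$ for every $R>0$.

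The only genuinely delicate point is this last step: condition \eqref{APS3} is a non-strict, convexity-type requirement and so cannot by itself force a strict inequality; the strict conclusion therefore has to be fed by the (standard, and here tacitly assumed) non-degeneracy of the energy at the natural state. Everything else is a direct translation, using no properties of $W$ beyond those already invoked in Theorem~\ref{LemmaAPS2} and Remark~\ref{lemma:KnowlesEllipticity}.
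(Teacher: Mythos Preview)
Your argument follows essentially the same route as the paper: define $\phi(R)=R\,\mathcal{W}'(3+R^2)$, use \eqref{APS3} to obtain monotonicity of $\phi$, note $\phi(0)=0$, and divide by $R$. The paper's proof is a terse three-line version of exactly this.

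Where you differ is in the handling of strictness. The paper simply writes that \eqref{APS3} gives $f'(R)>0$ and concludes $0=f(0)<f(R)$, implicitly treating the inequality in \eqref{APS3} as strict even though it is stated with $\geq$. You are right that the non-strict condition alone cannot force a strict conclusion, and your contradiction argument---tracing $\mathcal{W}'(3+R_0^2)=0$ back to $\mathcal{W}$ being constant on an interval and then invoking strict minimality (or positive infinitesimal shear modulus) at the reference state---is the honest way to close the gap. In effect you have identified and repaired a small imprecision in the paper's own proof rather than taken a different approach.
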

\begin{proof}
	Let $f(R)=R\.\mathcal{W}'(3+R^2)$. Then \eqref{APS3} implies $f'(R)>0$ for all $R>0$, i.e.\ monotonicity of $f$, thus $0=f(0)<f(R)=R\.\mathcal{W}'(3+R^2)$ and hence $\mathcal{W}'(3+R^2)>0$ for all $R>0$.
\end{proof}
\subsection{Energy function admissibility conditions}\label{sec:Conditions}
We introduced APS-convexity as a sufficient condition for the existence of a solution $\overline u(x_1,x_2)$ to equation \eqref{ELAPS_3} and, by means of \eqref{APS4}, derived a simple criterion for this condition. Another way to obtain such a solution $\overline u(x_1,x_2)$ without requiring APS-convexity is discussed in Gao \cite[Theorem 5]{gao2015remarks}, cf.\ Section \ref{sec:constrained-minimization}.

In the following, we consider under which circumstances this solution also satisfies the other two equations \eqref{ELAPS_1} and \eqref{ELAPS_2} so that the APS-deformation induced by $\overline u(x_1,x_2)$ is an overall solution of the full equilibrium equations \eqref{ELAPS_1}--\eqref{ELAPS_3}. The following theorem was obtained by \citet[eq. (21)]{knowles1977note}; here, we want to elaborate on his proof.

Recall that an energy function is APS admissible if every APS-equilibrium (solution of equation \eqref{ELAPS_3}) is also a global equilibrium (solves equations \eqref{ELAPS_1}--\eqref{ELAPS_3}).
\begin{theorem}\label{theoremKnowles}
	{\normalfont (Compressible case)} Let $W(I_1,I_2,I_3)$ be an isotropic, elastic energy function. Then $W$ is APS-admissible if and only if the following conditions are satisfied:\par
	\noindent\fbox{\parbox{\textwidth}{
		\setlength{\abovedisplayskip}{0pt}
		\setlength{\belowdisplayskip}{0pt}
		\begin{align}
			&\exists\, b\in\mathbb{R}:\forall\,I_1=I_2\geq 3\,,I_3=1:&\hspace{-1.5cm} b\,\frac{\partial W}{\partial I_1}(I_1,I_2,I_3)+(b-1)\,\frac{\partial W}{\partial I_2}(I_1,I_2,I_3)&=0\,,\tag{K1}\label{K1}\\
			&\forall\,I_1=I_2\geq 3\,,I_3=1:&\hspace{-1.5cm} \frac{\partial^2W}{\partial {I_1}^2}+I_1\frac{\partial^2W}{\partial I_1\partial I_2}+\frac{\partial^2W}{\partial I_1\partial I_3}+(I_1-1)\frac{\partial^2W}{\partial {I_2}^2}+\frac{\partial^2W}{\partial I_2\partial I_3}+\frac{1}{2}\frac{\partial W}{\partial I_2}&=0\,.\tag{K2}\label{K2}
		\end{align}
	}}\par
\end{theorem}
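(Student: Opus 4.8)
The strategy is to collapse the first two equilibrium equations \eqref{ELAPS_1}--\eqref{ELAPS_2} onto a single vectorial identity and then to recognise \eqref{K1} and \eqref{K2} as exactly the conditions under which that identity is forced by the third equation \eqref{ELAPS_3}. On the class of APS-deformations we have $I_1=I_2=3+s$ and $I_3=1$ with $s\colonequals\norm{\nabla u}^2$, so $G$, $H$ and $q$ reduce to functions of $s$ alone, written $\widehat G,\widehat H,\widehat q$. Since $\alpha^2G=u_{,x_1}\,(G\nabla u)_1$ and $\alpha\beta G=u_{,x_1}\,(G\nabla u)_2$, the right-hand side of \eqref{ELAPS_1} is $\div(u_{,x_1}\,G\nabla u)=G\,(\nabla u_{,x_1}\cdot\nabla u)+\alpha\,\div(G\nabla u)$; using $\nabla u_{,x_1}\cdot\nabla u=\frac{1}{2}s_{,x_1}$ and $q_{,x_1}=\widehat q'(s)\,s_{,x_1}$, equation \eqref{ELAPS_1} becomes $\Phi(s)\,s_{,x_1}=\alpha\,\div(G\nabla u)$ with $\Phi(s)\colonequals\widehat q'(s)-\frac{1}{2}\widehat G(s)$, and symmetrically \eqref{ELAPS_2} becomes $\Phi(s)\,s_{,x_2}=\beta\,\div(G\nabla u)$; hence \eqref{ELAPS_1}--\eqref{ELAPS_2} hold precisely when $\Phi(s)\,\nabla s=\div(G\nabla u)\,\nabla u$, which we call $(\star)$. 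A routine differentiation of $W(3+s,3+s,1)$ — using $q=p+2\,\partial W/\partial I_1+2\,(I_1-1)\,\partial W/\partial I_2$ and $2\,\partial W/\partial I_1=H-G$ — then identifies $\Phi\equiv 0$ with condition \eqref{K2}, and shows that \eqref{K1} says exactly that $G=b\,H$ on the APS-set for some constant $b\in\mathbb{R}$.

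For sufficiency, assume \eqref{K1} and \eqref{K2}. Then $\Phi\equiv 0$, so the left-hand side of $(\star)$ vanishes, while $G\nabla u=b\,H\nabla u$ gives $\div(G\nabla u)=b\,\div(H\nabla u)=0$ along every APS-equilibrium (i.e.\ every solution of \eqref{ELAPS_3}), so the right-hand side of $(\star)$ vanishes as well; thus every APS-equilibrium solves the full system \eqref{ELAPS_1}--\eqref{ELAPS_3} and $W$ is APS-admissible.

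For necessity, assume $W$ is APS-admissible, so $(\star)$ holds along every solution of \eqref{ELAPS_3}. Fix $s_0>0$ and choose such a solution which at some point has $\nabla u\neq 0$ with $\norm{\nabla u}^2=s_0$ and $\nabla s$ linearly independent of $\nabla u$; by isotropy we may take $\nabla u=(\sqrt{s_0},0)$ there, so that \eqref{ELAPS_3} pins down $u_{,x_2x_2}$ in terms of $u_{,x_1x_1}$, leaves $u_{,x_1x_1}$ and $u_{,x_1x_2}$ free, and makes $\div(G\nabla u)$ a fixed multiple of $u_{,x_1x_1}$. The two components of $(\star)$ then read $\Phi(s_0)\,u_{,x_1x_2}=0$ and $2\,\Phi(s_0)\,u_{,x_1x_1}=\div(G\nabla u)$; freeness of $u_{,x_1x_2}$ forces $\Phi(s_0)=0$, and then $\div(G\nabla u)=0$ for this field. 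Letting $s_0$ run over $(0,\infty)$ gives $\Phi\equiv 0$, which is \eqref{K2}; and $\div(G\nabla u)=0$ for all solutions of \eqref{ELAPS_3}, together with \eqref{ELAPS_3} itself, forces $\widehat G/\widehat H$ to be constant (the quotient being well defined since $\widehat H>0$ by Lemma \ref{lemma:fosdick}), i.e.\ $G=b\,H$, which is \eqref{K1}.

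The point demanding care is the genericity used in the necessity part: one must know that \eqref{ELAPS_3} admits, for each prescribed $s_0>0$, a solution realising at some point an otherwise unconstrained second jet (subject only to \eqref{ELAPS_3}). This is where the ellipticity of $W$ — its APS-convexity, which also underlies the use of Lemma \ref{lemma:fosdick} — really enters, and for a degenerate $W$ the notion of APS-admissibility largely loses content. The remaining steps — identifying $\Phi\equiv 0$ with \eqref{K2} and $\exists b\colon G=bH$ with \eqref{K1} — are elementary differentiations in the variables $I_1=I_2=3+s$, $I_3=1$.
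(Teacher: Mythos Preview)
Your sufficiency argument is correct and is essentially the paper's argument, only reorganised: the paper first invokes \eqref{K1} to obtain $\div(G\nabla u)=b\,\div(H\nabla u)=0$ along any solution of \eqref{ELAPS_3}, then reduces \eqref{ELAPS_1}--\eqref{ELAPS_2} to $\bigl(\widetilde q\,'(\gamma^2)-\tfrac12 G\bigr)\,2\gamma\,\gamma_{,x_i}=0$ and identifies the bracket with \eqref{K2}. Your identity $(\star)$, namely $\Phi(s)\,\nabla s=\div(G\nabla u)\,\nabla u$, is exactly this computation carried out \emph{before} assuming \eqref{K1}, so that the two contributions are displayed simultaneously; the identifications $\Phi\equiv 0\Leftrightarrow\eqref{K2}$ and $G=bH\Leftrightarrow\eqref{K1}$ are the same as in the paper.

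On necessity you actually go further than the paper does: the paper does not prove necessity in the text but refers (in a footnote) to Knowles' original article for \eqref{K1}. Your sketch via free $2$-jets is the right idea --- and your computation that, at a point with $\nabla u=(\sqrt{s_0},0)$ and \eqref{ELAPS_3} enforced, one has $\div(G\nabla u)=-2s_0\,H\,(G/H)'\,u_{,x_1x_1}$, so that $(\star)$ forces first $\Phi(s_0)=0$ (from the $u_{,x_1x_2}$-component) and then $(G/H)'(s_0)=0$, is correct. The gap you yourself flag is genuine, however: the theorem as stated carries no APS-convexity hypothesis, yet your argument needs it twice --- once to guarantee, for each $s_0>0$, an APS-equilibrium realising a generic second jet at a point with $\lVert\nabla u\rVert^2=s_0$, and once through Lemma~\ref{lemma:fosdick} to ensure $\widehat H>0$ so that $G/H$ is defined. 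Without some such nondegeneracy, the class of APS-equilibria may be too thin for the jet argument to bite, and the ``only if'' direction is not established as written.
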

\begin{proof}
	Recall from Section \ref{sec:free-minimization} that for an APS-deformation, the three equations of equilibrium are given by
	\begin{align}
			q_{,x_1}&=(\alpha^2 G)_{,x_1}+(\alpha\beta G)_{,x_2}\,,&\qquad &\text{(I)}\notag\\
			q_{,x_2}&=(\alpha\beta G)_{,x_1}+(\beta^2 G)_{,x_2}\,,&\qquad &\text{(II)}\\
			0&=(\alpha H)_{,x_1}+(\beta H)_{,x_2}\,.&\qquad &\text{(III)}\vphantom{\beta^2}\notag
		\end{align}
	Let $\overline u(x_1,x_2)$ be an arbitrary solution of equation \eqref{ELAPS_3}, i.e.\ an APS-equilibrium. We want to derive the equations \eqref{K1} and \eqref{K2} as conditions on $W(I_1,I_2,I_3)$ for the other two Euler-Lagrange equations to be necessarily satisfied for $\overline u(x_1,x_2)$.
	
	If relation \eqref{K1} holds,\footnote{For the necessity of \eqref{K1}, see \citet[eq.(3.22)]{knowles1976finite}.} we can simplify equation \eqref{ELAPS_1} to read\footnote{With the notation from \eqref{notation:GH}, we can restate \eqref{K1} as $b\,H(I_1,I_2)=G(I_1,I_2)$ with constant $b$. Therefore, the relationship $\div(H\,\nabla u)=0$ together with $b\,H(I_1,I_2)=G(I_1,I_2)$ yields $\div(G\,\nabla u)=0\,.$}
	\begin{align}
		q_{,x_1}&=(\alpha^2\.G)_{,x_1}+(\alpha\beta\.G)_{,x_2}\;=\;\alpha(\alpha\.G)_{,x_1}+\alpha_{,x_1}\alpha\.G+\alpha(\beta \.G)_{,x_2}+\alpha_{,x_2}\beta\.G\notag\\
		&=  \alpha\,\div(G\,\nabla u)+\alpha_{,x_1}\alpha\.G+\alpha_{,x_2}\beta\.G\;=\; G\.(\alpha_{,x_1}\alpha+\alpha_{,x_2}\beta)\notag\\
		&= G\.(\alpha\alpha_{,x_1}+\beta\beta_{,x_1})\;=\; G\.\frac{\partial}{\partial x_1}\left[\frac{1}{2}\gamma^2\right]=\; G\.\gamma\.\gamma_{,x_1}\,,
	\end{align}
	where $\alpha_{,x_2}=u_{,x_1x_2}=u_{,x_2x_1}=\beta_{,x_1}$.\par
	By utilizing the fact that the invariants\footnote{Note again that $I_1=I_2=3+\gamma^2=3+\|\nabla u\|^2$.} depend only on $u(x_1,x_2)$, the term $q(I_1,I_2)$ can be expressed as
	\begin{align}
		u(x_1,x_2)\mapsto q(3+\gamma^2,3+\gamma^2)\colonequals\widetilde q(\gamma^2)\qquad\text{with}\qquad\frac{\partial q}{\partial x_1}(3+\gamma^2,3+\gamma^2)=\widetilde q\.^\prime(\gamma^2)\.2\.\gamma\.\gamma_{,x_1}\,.
	\end{align}
	Therefore, we can transform \eqref{ELAPS_2} and similarly \eqref{ELAPS_1} to
	\begin{align}
		\left(\widetilde q\.^\prime(\gamma^2)-\frac{1}{2}\,G(3+\gamma^2,3+\gamma^2)\right)2\.\gamma\.\gamma_{,x_1}=0\,,\qquad
		\left(\widetilde q\.^\prime(\gamma^2)-\frac{1}{2}\,G(3+\gamma^2,3+\gamma^2)\right)2\.\gamma\.\gamma_{,x_2}=0\,,
	\end{align}
	respectively. As a result, the Euler-Lagrange equations are simplified by condition \eqref{K1} to the system of equations
	\begin{align}
		\left[\widetilde q\.^\prime(\gamma^2)-\frac{\partial W}{\partial I_2}(3+\gamma^2,3+\gamma^2,1)\right]2\.\gamma\gamma_{,x_1}&=0\,,&\qquad &\text{(I)}\notag\\
		\left[\widetilde q\.^\prime(\gamma^2)-\frac{\partial W}{\partial I_2}(3+\gamma^2,3+\gamma^2,1)\right]2\.\gamma\gamma_{,x_2}&=0\,,&\qquad &\text{(II)}\label{eq:ELgammatoR}\\
		\left[\alpha\,\frac{\partial W}{\partial I_2}(3+\gamma^2,3+\gamma^2,1)\right]_{,x_1}+\left[\beta\,\frac{\partial W}{\partial I_2}(3+\gamma^2,3+\gamma^2,1)\right]_{,x_2} &=0\,.&\qquad &\text{(III)}\notag
	\end{align}
	Note that equations (I) and (II) are trivially satisfied if $\gamma = \|\nabla u\|^2$ is constant, i.e.\ if $\varphi$ is a simple plane shear deformation. In the general case of arbitrary APS-deformations, however, (I) and (II) are satisfied if and only if the equation
	\begin{align}
		\widetilde q\.^\prime(R^2)=\frac{\partial W}{\partial I_2}(3+R^2,3+R^2,1)\,.\label{ELAPS_1+2}
	\end{align}
	holds for all $R\in\R$.
	Thus, equations \eqref{ELAPS_1} and \eqref {ELAPS_2} are reduced to a single new equation \eqref{ELAPS_1+2} by \eqref{K1}. The system of equations is still over-determined by one equation. Therefore, we need to show that the last equation \eqref{ELAPS_1+2} is equivalent to the energy function compatibility condition \eqref{K2}:
	\begin{align*}
		\widetilde q\.^\prime(R^2)=&\ 2\frac{d}{d(R^2)}\frac{\partial W}{\partial I_3}(3+R^2,3+R^2,1)+2\frac{d}{d(R^2)}\frac{\partial W}{\partial I_1}(3+R^2,3+R^2,1)+2\frac{d}{d(R^2)}\left[(2+R^2)\frac{\partial W}{\partial I_2}(3+R^2,3+R^2,1)\right]\\
		=&\ 2\left(\frac{\partial^2W}{\partial I_3\partial I_1}\,1+\frac{\partial^2W}{\partial I_3\partial I_2}\, 1\right)+2\left(\frac{\partial^2W}{\partial {I_1}^2}\, 1+\frac{\partial^2W}{\partial I_1\partial I_2}\, 1\right)+2\.\frac{\partial W}{\partial I_2}+2(2+R^2)\left(\frac{\partial^2W}{\partial I_2\partial I_1}\, 1+\frac{\partial^2W}{\partial {I_2}^2}\, 1\right)\\
		=&\ 2\left[\frac{\partial^2W}{\partial I_1\partial I_3}+\frac{\partial^2W}{\partial I_2\partial I_3}+\frac{\partial^2W}{\partial {I_1}^2}+\frac{\partial^2W}{\partial I_1\partial I_2}+(2+R^2)\left(\frac{\partial^2W}{\partial I_1\partial I_2}+\frac{\partial^2W}{\partial {I_2}^2}\right)+\frac{\partial W}{\partial I_2}\right]\\
		=&\ 2\left[\frac{\partial^2W}{\partial I_1\partial I_3}+\frac{\partial^2W}{\partial I_2\partial I_3}+\frac{\partial^2W}{\partial {I_1}^2}+I_1\frac{\partial^2W}{\partial I_1\partial I_2}+(I_1-1)\frac{\partial^2W}{\partial {I_2}^2}+\frac{\partial W}{\partial I_2}\right]\,.
	\end{align*}
	Thus \eqref{ELAPS_1+2} and \eqref{K2} are, in fact, equivalent in this case.\par
	Altogether, under the two conditions \eqref{K1} and \eqref{K2}, the Euler-Lagrange equations for a compressible energy function are always simplified such that the equations \eqref{ELAPS_1} and \eqref{ELAPS_2} can be omitted for any solution of equation \eqref{ELAPS_3}.
\end{proof}
In the case of incompressible nonlinear elasticity, energy functions are only defined on the special linear group of isochoric deformations with $I_3=1$, thus condition $\eqref{K2}$ is not well defined. However, since APS-deformations belong to the class of isochoric deformations, the problem of APS-admissibility can be considered in the incompressible case as well.
It should be expected that in the incompressible case, less restricting requirements than the conditions $\eqref{K1}$ and $\eqref{K2}$ are needed to ensure APS-admissibility.\par 
The concept of APS-convexity remains the same for incompressible and compressible energy functions, starting with the variational problem
\begin{align}
     	\displaystyle\int_\Omega W(\nabla\varphi )\,\dx\,\longrightarrow\underset{\det \nabla\varphi =1}{\min}\quad\implies\quad \displaystyle\int_\Omega W(\nabla\varphi )+p(x)\,(\det(\nabla\varphi) -1)\,\dx\,\longrightarrow\,\min.,
\end{align}
where $p(x_1,x_2,x_3)\in C^1(\Omega)$ is now the Lagrange multiplier for the constraint $\det\nabla\varphi=1$ of incompressiblity. With the same notation as before, the Euler-Lagrange equations are simplified to
\[
	\Div\left[DW(F)+p(x)\.\Cof(F)\right]=0
\]
with $\Cof(F)=(\det F)\.F^{-T}= F^{-T}$ by incompressibility. We obtain the same formal equation as in the compressible case \eqref{ELInv}:
\begin{align}
	\Div\left(2\frac{\partial W}{\partial I_1}F+2\frac{\partial W}{\partial I_2}(I_1\id-B)F+pF^{-T}\right)=0\,.\label{ELInvInc}
\end{align}
Here, however, $p\in C^1(\Omega,\mathbb{R})$ is the Lagrange multiplier and not a fixed term given by the energy function $W(F)$. This yields the same equilibrium system of three coupled partial differential equations, but this time in two scalar-valued functions $u(x_1,x_2)$ and $p(x_1,x_2,x_3)$. Therefore, the equilibrium system is only over-determined by one equation, which means that although the system still does not have a solution in general, only one condition on the energy function is required for APS-admissibility.
\begin{theorem}
	{\normalfont (Incompressible case)} Let $W(I_1,I_2)$ be an isotropic and \textbf{incompressible} elastic energy function. The function $W$ is APS-admissible if and only if\par
	\noindent\fbox{\parbox{\textwidth}{
		\setlength{\abovedisplayskip}{0pt}
		\setlength{\belowdisplayskip}{0pt}
		\begin{align}
			\exists\. b\in\mathbb{R}:\forall\,I_1=I_2\geq 3:\qquad b\.\frac{\partial W}{\partial I_1}(I_1,I_2)+(b-1)\.\frac{\partial W}{\partial I_2}(I_1,I_2)=0\,.\tag{K1}
		\end{align}
	}}\par
\end{theorem}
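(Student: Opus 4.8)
The plan is to mirror the proof of Theorem~\ref{theoremKnowles}, exploiting the decisive difference that in the incompressible setting the pressure $p$ is a \emph{free} Lagrange multiplier and not a quantity prescribed by $W$; this freedom is exactly what makes the companion condition \eqref{K2} superfluous. First I would note that \eqref{ELInvInc} has the same formal shape as \eqref{ELInv}, so for an APS-deformation it reduces to the same three scalar equations \eqref{ELAPS_1}--\eqref{ELAPS_3}, the only change being that the abbreviation $q$ from \eqref{notation:GH} now carries the free function $p$. A short preliminary step pins down the role of $x_3$: if $\overline u$ solves \eqref{ELAPS_3}, i.e.\ $\div(H\,\nabla\overline u)=0$, then the third component of \eqref{ELInvInc} forces $p_{,x_3}=-\div(H\,\nabla\overline u)=0$, so on any APS-equilibrium $p$, and hence $q$, may be taken to depend on $(x_1,x_2)$ only, and the problem collapses to producing such a $q$ satisfying \eqref{ELAPS_1} and \eqref{ELAPS_2}.

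For the sufficiency of \eqref{K1} I would argue exactly as in the proof of Theorem~\ref{theoremKnowles}: \eqref{K1} amounts to $G\equiv b\,H$ on $\{I_1=I_2\geq 3\}$ (in the notation \eqref{notation:GH}), hence \eqref{ELAPS_3} yields $\div(G\,\nabla\overline u)=b\,\div(H\,\nabla\overline u)=0$, and the computation carried out there reduces \eqref{ELAPS_1} and \eqref{ELAPS_2} to $q_{,x_i}=\frac{1}{2}\,G\,(\gamma^2)_{,x_i}=\frac{\partial W}{\partial I_2}(3+\gamma^2,3+\gamma^2,1)\,(\gamma^2)_{,x_i}$ for $i=1,2$, where $\gamma^2=\norm{\nabla\overline u}^2$. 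The right-hand side is an exact gradient $\nabla[\,\Phi(\norm{\nabla\overline u}^2)\,]$ with $\Phi'(s)=\frac{\partial W}{\partial I_2}(3+s,3+s,1)$, so setting $q\colonequals\Phi(\norm{\nabla\overline u}^2)$ --- equivalently, defining the multiplier $p\colonequals\Phi(\gamma^2)-2\,\frac{\partial W}{\partial I_1}-2\,(2+\gamma^2)\,\frac{\partial W}{\partial I_2}$ evaluated at $(3+\gamma^2,3+\gamma^2,1)$ --- makes \eqref{ELAPS_1} and \eqref{ELAPS_2} hold identically. Thus $(\overline u,p)$ is a global equilibrium on any admissible domain, every APS-equilibrium is a global APS-equilibrium, and $W$ is APS-admissible; note that no assumption on the topology of $\Omega_{xy}$ enters this direction.

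For the necessity I would run the same reduction \emph{without} \eqref{K1}. On a simply connected $\Omega_{xy}$ the pair \eqref{ELAPS_1}--\eqref{ELAPS_2} is solvable for $q\in C^1(\Omega_{xy})$ if and only if the integrability condition $\partial_{x_2}[(\alpha^2 G)_{,x_1}+(\alpha\beta G)_{,x_2}]=\partial_{x_1}[(\alpha\beta G)_{,x_1}+(\beta^2 G)_{,x_2}]$ holds; using $\alpha_{,x_2}=\beta_{,x_1}$, the fact that $G$ depends on $\gamma^2$ alone, and \eqref{ELAPS_3}, this simplifies to
\[
	u_{,x_1}\,D_{,x_2}-u_{,x_2}\,D_{,x_1}=0\quad\text{in }\Omega_{xy}\,,\qquad D\colonequals\div(G\,\nabla\overline u)\,,
\]
where, by \eqref{ELAPS_3} once more, $D=H\,\lambda'(\gamma^2)\,\langle\nabla(\gamma^2),\nabla\overline u\rangle$ on the set $\{H\neq 0\}$ with $\lambda\colonequals G/H$. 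If \eqref{K1} fails, $\lambda$ is non-constant, and it remains to exhibit one solution $\overline u$ of $\div(H(\norm{\nabla u}^2)\,\nabla u)=0$ --- necessarily not a simple plane shear, since those always extend to a global equilibrium --- for which $u_{,x_1}D_{,x_2}-u_{,x_2}D_{,x_1}$ does not vanish identically; for such $\overline u$ no admissible $q$, hence no $p$, exists, so $\overline u$ is an APS-equilibrium that is not a global equilibrium and $W$ is not APS-admissible. The existence of such a genuinely two-dimensional solution whenever $\lambda$ is non-constant is exactly Knowles' construction \cite[eq.~(3.22)]{knowles1976finite}, which I would invoke rather than reproduce.

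The main obstacle I anticipate is precisely this last step: the sufficiency direction is essentially a transcription of the compressible proof with the freedom in $p$ taking over the role of \eqref{K2}, whereas the necessity direction hinges on producing --- or citing --- a concrete solution of the possibly non-elliptic equation \eqref{ELAPS_3} that defeats the compatibility determinant and is not one of the trivially admissible simple plane shear deformations. The only additional remark worth recording is that APS-admissibility is a property of the energy $W$ alone, independent of the body, so it suffices to violate it on one convenient domain and one choice of Dirichlet data.
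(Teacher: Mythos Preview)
Your proposal is correct and follows essentially the same route as the paper. The paper's own proof is extremely terse---it simply states that, analogously to Theorem~\ref{theoremKnowles}, condition \eqref{K1} removes one equation, leaving two equations for the two unknowns $u$ and $p$, and that $p$ can then be computed (details deferred to \cite{voss2017master}); necessity is not argued at all beyond the footnote reference to \cite[eq.~(3.22)]{knowles1976finite} already given in the compressible case. You carry out precisely this program but make explicit what the paper leaves implicit: the preliminary observation that the third equilibrium equation forces $p_{,x_3}=0$ on an APS-equilibrium, the closed-form construction $q=\Phi(\norm{\nabla\overline u}^{2})$ with $\Phi'(s)=\tfrac{\partial W}{\partial I_2}(3+s,3+s,1)$ replacing the role that \eqref{K2} played in the compressible proof, and the curl/integrability condition $u_{,x_1}D_{,x_2}-u_{,x_2}D_{,x_1}=0$ with $D=\div(G\,\nabla\overline u)$ governing the necessity direction. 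Both you and the paper ultimately invoke Knowles' construction for the actual counterexample when \eqref{K1} fails, so there is no substantive divergence in strategy---your write-up is just a more fleshed-out version of the same argument.
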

\begin{proof}
	Analogously to the proof of Theorem \eqref{theoremKnowles}, the Euler-Lagrange equations can be reduced with the condition \eqref{K1} by one equation. Therefore, we can remove one of the first two Euler-Lagrange equations and leave two equations to determine $u(x_1,x_2)$ and $p(x_1,x_2,x_3)$. The system of equations is therefore no longer over-determined under the assumption of \eqref{K1}. Moreover, it is possible to compute the Lagrange-multiplier $p(x_1,x_2,x_3)$ for a given solution $\overline u(x_1,x_2)$.\footnote{For detailed calculations, see \cite{voss2017master}.}
\end{proof}
\begin{remark}
	For $\frac{\partial W}{\partial I_1}=c_1$ and $\frac{\partial W}{\partial I_2}=c_2$ with arbitrary constants $c_1,c_2>0$, condition \eqref{K1} is automatically satisfied with $b=\frac{c_2}{c_1+c_2}$ and the energy function is APS-convex \eqref{APS3}.
\end{remark}
\begin{remark}
In linear elasticity, the energy function $W_{\rm{lin}}(\varepsilon)=\mu\,\|\varepsilon\|^2+\frac{\lambda}{2}\,\text{tr}(\varepsilon)^2$ with $\varepsilon=\sym\nabla u$ is automatically APS-admissible and APS-convex \cite{voss2017master}. Therefore, any linear elasticity solution constrained by APS-boundary conditions is automatically an APS-deformation. Thus APS-admissibility is an inherently nonlinear concept.
\end{remark}
%
%
%
%
\section{Connections to constitutive requirements in nonlinear\\ elasticity}
The concept of APS-convexity can be extended to the class of APS$^\textbf{+}$-deformations $\varphi :\Omega\subset\mathbb{R}^3\to\mathbb{R}^3$,
\begin{align}
	\varphi(x_1,x_2, x_3)= \left(x_1,\;x_2,\;x_3+u(x_1, x_2, x_3)\right)\quad\text{with}\quad\varphi\in C^1(\Omega)\,.\label{eq:scalarType}
\end{align}
We call convexity of this type of functions \textbf{APS}$^\textbf{+}$\textbf{-convexity}. Note that APS$^+$-convexity immediately implies APS-convexity.
\subsection{Convexity}\label{sec:convex}
The following lemma shows that an energy function $W$ is APS$^+$-convex (and thus APS-convex) if it is \emph{polyconvex}, i.e.\ if \cite[eq.(0.8)]{ball1976convexity}
\begin{align*}
	&W(F)=P(F,\:\text{Cof}(F),\:\text{det}(F))\quad\text{with}\quad P:\mathbb{R}^{3\times 3}\times\mathbb{R}^{3\times 3}\times\mathbb{R}\cong\mathbb{R}^{19}\,\longrightarrow\,\mathbb{R}\quad\text{convex}\,.
\end{align*}
\begin{lemma}\label{LemmaPolyAps}
	Every polyconvex energy function $W(F)$ is $\text{APS$^+$}$-convex.
\end{lemma}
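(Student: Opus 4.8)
The plan is to exploit the fact that, on the set of admissible APS$^+$-deformation gradients, the embedding $F\mapsto(F,\Cof F,\det F)$ that enters the definition of polyconvexity degenerates to an \emph{affine} map; convexity of $W$ along this set then follows immediately, since the composition of a convex function with an affine map is convex.

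First I would record the deformation gradient of a general APS$^+$-deformation $\varphi(x_1,x_2,x_3)=(x_1,x_2,x_3+u(x_1,x_2,x_3))$. Writing $a\colonequals u_{,x_1}$, $b\colonequals u_{,x_2}$ and $c\colonequals 1+u_{,x_3}$ --- so that $(a,b,c)$ ranges over all of $\mathbb{R}^3$ as $\varphi$ varies over $C^1(\Omega)$ --- one gets
\begin{align*}
	F=\begin{pmatrix}1&0&0\\0&1&0\\a&b&c\end{pmatrix},
\end{align*}
so the set $\mathcal{APS}^+$ of admissible gradients is a three-dimensional affine subspace of $\mathbb{R}^{3\times 3}$, parametrized affinely by $(a,b,c)$ (the set $\mathcal{APS}$ is the slice $c=1$).

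Next I would compute $\Cof F$ and $\det F$ on this subspace. A direct minor-by-minor calculation gives
\begin{align*}
	\Cof F=\begin{pmatrix}c&0&-a\\0&c&-b\\0&0&1\end{pmatrix},\qquad\det F=c,
\end{align*}
which are affine in $(a,b,c)$, hence affine in $F$ along $\mathcal{APS}^+$. Therefore the map $\iota\col\mathcal{APS}^+\to\mathbb{R}^{3\times 3}\times\mathbb{R}^{3\times 3}\times\mathbb{R}$, $F\mapsto(F,\Cof F,\det F)$, is the restriction of an affine map. Writing the polyconvex energy as $W(F)=P(F,\Cof F,\det F)=(P\circ\iota)(F)$ with $P$ convex, the restriction $W|_{\mathcal{APS}^+}$ is convex as the composition of the convex $P$ with the affine $\iota$ --- which is precisely APS$^+$-convexity. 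Since APS$^+$-convexity implies APS-convexity, this also shows that every polyconvex $W$ is APS-convex and, by Remark \ref{lemma:KnowlesEllipticity}, satisfies \eqref{APS3}.

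The only genuine computation is the cofactor/determinant step, and its entire content is the observation that the "genuinely quadratic" entries of $\Cof F$ either vanish or collapse to linear expressions in $(a,b,c)$ on $\mathcal{APS}^+$, while $\det F$ is linear there; so none of the nonconvexity that $\Cof$ and $\det$ generically carry survives the restriction. I do not anticipate any serious obstacle beyond being careful that $c$ (equivalently $\det F$) genuinely runs over all of $\mathbb{R}$, so that $\mathcal{APS}^+$ is an honest affine subspace and $\iota$ is globally affine on it.
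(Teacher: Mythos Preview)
Your argument is correct and follows essentially the same route as the paper: both compute $F$, $\Cof F$, and $\det F$ explicitly on the APS$^+$-subspace, observe that all three are affine in the parameters $(a,b,c)$, and then deduce convexity of $W|_{\mathcal{APS}^+}$ from the convexity of $P$. The paper writes out the convex combination inequality by hand, whereas you invoke the ``convex composed with affine is convex'' principle; this is the same proof in slightly more compact packaging.
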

\begin{proof}
	For $\text{APS}^+$-convexity of $W$ in $F=\nabla\varphi$ we have to show that
	\begin{align}
		\quad W(t\:\nabla\varphi_1+(1-t)\:\nabla\varphi_2)\leq t\:W(\nabla\varphi_1)+(1-t)\:W(\nabla\varphi_2)\,,\quad t\in [0,1]\notag
	\end{align}
	holds for arbitrary  APS$^\textbf{+}$-deformations $\varphi_1, \varphi_2$ \eqref{eq:scalarType}. In this case, the minors of $F=\nabla\varphi$ are given by
	\begin{align}
		F=\left(\begin{matrix}
			1 & 0 & 0 \\
			0 & 1 & 0 \\
			u_{,x_1} & u_{,x_2} & 1+u_{,x_3}
		\end{matrix}\right),\quad\text{ Cof}(F)=\left(\begin{matrix}
			1+u_{,x_3} & 0 & -u_{,x_1} \\
			0 & 1+u_{x_3} & -u_{,x_2} \\
			0& 0 & 1
		\end{matrix}\right),\quad\text{ det}(F)=1+u_{,x_3}\,.\label{skaF}
	\end{align}
	Due to the affine linearity of the above terms, we find for $\varphi =t\,\varphi_1+(1-t)\, \varphi_2$:
	\begin{align}
		F&=t\,F_1+(1-t)\,F_2\,,\notag\\
      	\Cof(t\,F_1+(1-t)\,F_2)&=t\,\text{Cof}(F_1)+(1-t)\,\text{Cof}(F_2)\,,\\
      	\det(t\,F_1+(1-t)\,F_2)&=t\,\text{det}(F_1)+(1-t)\,\text{det}(F_2)\,,\notag
	\end{align}
	where $F=\nabla\varphi\,.$ If $P$ is convex, then
	\begin{align*}
		W(t\,F_1+(1-t)\,F_2)
		&=P(t\,F_1+(1-t)\,F_2,\,\text{Cof}(t\,F_1+(1-t)\,F_2),\,\text{det}(t\,F_1+(1-t)\,F_2))\\
		&=P(t\,F_1+(1-t)\,F_2,\,\text{Cof}(F_1)+(1-t)\,\text{Cof}(F_2),\,t\,\text{det}(F_1)+(1-t)\,\text{det}(F_2))\\
		&\leq t\,P(F_1,\,\text{Cof}(F_1),\,\text{det}(F_1))+(1-t)\,P(F_2,\,\text{Cof}(F_2),\,\text{det}(F_2))\\
		&=t\,W(F_1)+(1-t)\,W(F_2)\,.\qedhere
	\end{align*}
\end{proof}
We now want to reduce the requirement of polyconvexity to that of rank-one convexity. An energy function $W(F)$ is called \emph{rank-one convex} if the mapping $t\mapsto W(F+t\,\xi\otimes\eta)$ is convex on $[0,1]$ for all $F\in\mathbb{R}^{3\times 3}$ and all $\xi,\eta\in\mathbb{R}^3$.
\begin{lemma}\label{lemma:Rank1APS}
	Every rank-one convex energy function $W(F)$ is $\text{APS}^+$-convex.
\end{lemma}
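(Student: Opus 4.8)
The plan is to exploit the very special rank-one structure of the set of admissible deformation gradients. By \eqref{skaF}, the gradient of any APS$^+$-deformation has the form
\begin{align*}
	F=\left(\begin{matrix} 1 & 0 & 0 \\ 0 & 1 & 0 \\ a & b & c \end{matrix}\right),\qquad (a,b,c)=(u_{,x_1},u_{,x_2},1+u_{,x_3})\in\mathbb{R}^3\,,
\end{align*}
so these gradients form an affine subspace of $\mathbb{R}^{3\times 3}$. The crucial observation is that any two of them differ by a matrix whose only nonzero entries lie in the third row, hence by a matrix of rank at most one: for APS$^+$-deformations $\varphi_1,\varphi_2$ with $F_i=\nabla\varphi_i$ one has, at every point of $\Omega$,
\begin{align*}
	F_1-F_2=\left(\begin{matrix} 0 & 0 & 0 \\ 0 & 0 & 0 \\ \ast & \ast & \ast \end{matrix}\right)=e_3\otimes w\,,\qquad e_3\colonequals(0,0,1)^T\,,
\end{align*}
where $w\in\mathbb{R}^3$ is the (pointwise) difference of the bottom rows of $F_1$ and $F_2$.

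Given this, I would fix $t\in[0,1]$ and a point of $\Omega$ and consider the scalar function $g(s)\colonequals W\big(F_2+s\,e_3\otimes w\big)$ on $[0,1]$, with $w$ chosen so that $e_3\otimes w=F_1-F_2$ at that point. Applying the definition of rank-one convexity with the fixed rank-one direction $\xi=e_3$, $\eta=w$ and base point $F=F_2$ shows that $g$ is convex on $[0,1]$. Since $g(0)=W(F_2)$, $g(1)=W(F_1)$ and $F_2+t\,e_3\otimes w=t\,F_1+(1-t)\,F_2$, convexity of $g$ yields
\begin{align*}
	W\big(t\,\nabla\varphi_1+(1-t)\,\nabla\varphi_2\big)=g(t)\leq (1-t)\,W(\nabla\varphi_2)+t\,W(\nabla\varphi_1)\,,
\end{align*}
which, $t\in[0,1]$ and $\varphi_1,\varphi_2$ being arbitrary, is precisely APS$^+$-convexity of $W$.

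I do not expect a genuine obstacle here: the argument is essentially the single remark that the directions "missing'' from $\mathcal{APS}^+$ are exactly rank-one perturbations of the fixed type $e_3\otimes w$. The only points to keep straight are that the estimate is carried out pointwise in $\Omega$ (the direction $e_3$ is fixed, while $w$ varies with the point), that the case $w=0$ is trivial, and that no regularity beyond $\varphi_i\in C^1$ enters. Note also that this strengthens Lemma \ref{LemmaPolyAps}, since polyconvexity implies rank-one convexity, and it re-derives the APS-convexity of such energies without passing through the minors at all.
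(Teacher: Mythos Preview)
Your argument is correct and essentially identical to the paper's own proof: both observe that the difference $F_1-F_2$ of two APS$^+$-gradients lies in the rank-one direction $e_3\otimes w$, so rank-one convexity immediately yields convexity of $t\mapsto W(F_2+t(F_1-F_2))$ on $[0,1]$.
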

\begin{proof}
	Again, we need to show that the mapping
	\[
		t\mapsto W(t\.F_1+(1-t)\.F_2) = W(F_2+t\.(F_1-F_2))
	\]
	is convex on $[0,1]$ for all $F_1,F_2$ of the form \eqref{skaF}$_1$. However, this convexity property follows directly from the rank-one convexity since $F_1-F_2$ is of the form
	\[
		F_1-F_2 =
			\left(\begin{matrix}
				0 & 0 & 0 \\
				0 & 0 & 0 \\
				u_{,x_1}-v_{,x_1} & u_{,x_2}-v_{,x_2} & u_{,x_3}-v_{,x_3}
			\end{matrix}\right)=\left(\begin{array}{c}
	  			0 \\0\\1
	  		\end{array}\right)\otimes\left(\begin{array}{c}
			  	u_{,x_1}-v_{,x_1} \\ u_{,x_2}-v_{,x_2} \\ u_{,x_3}-v_{,x_3}
	  		\end{array}\right)
	  	.\qedhere
	\]
\end{proof}
\begin{remark}
	The above proof also shows that $W$ is APS$^+$-convex if and only if the mapping $t\mapsto W(F+t\,(0,0,1)^T\otimes\eta)$ is convex on $[0,1]$ for all $F$ of the form \eqref{skaF}$_1$ and all $\eta\in\mathbb{R}^3$. 
\end{remark}
\begin{corollary}
	If $W(F)$ is strictly rank-one convex and APS-admissible, then the anti-plane shear solution (APS-equilibrium) is a unique APS-equilibrium and minimal in the class of APS-deformations, due to APS-convexity.
\end{corollary}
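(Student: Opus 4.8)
The plan is to upgrade the (non-strict) APS-convexity furnished by Lemma~\ref{lemma:Rank1APS} to \emph{strict} APS-convexity, and then to read off uniqueness and minimality from strict convexity of the restricted energy functional. Revisiting the proof of Lemma~\ref{lemma:Rank1APS}: for two distinct matrices $F_1,F_2\in\mathcal{APS}$ the difference
\[
	F_1-F_2=(0,0,1)^T\otimes(u_{,x_1}-v_{,x_1},\,u_{,x_2}-v_{,x_2},\,0)^T
\]
is a \emph{nonzero} rank-one matrix, so strict rank-one convexity of $W$ makes $t\mapsto W\bigl(F_2+t\,(F_1-F_2)\bigr)$ strictly convex on $[0,1]$; that is, $W$ is strictly convex along every segment in $\mathcal{APS}$, equivalently the map $(\alpha,\beta)\mapsto W(3+\gamma^2,3+\gamma^2,1)=g(\norm{(\alpha,\beta)}^2)$ is strictly convex on $\mathbb{R}^2$.

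Next I would pass from the integrand to the functional. Restricted to APS-deformations obeying the prescribed APS-Dirichlet data, the energy reduces to $I_{\mathrm{APS}}(u)=\int_\Omega g(\norm{\nabla u}^2)\,\dx$ over the affine class of admissible height functions, exactly as in \eqref{eq:VariationalIII}. Two distinct admissible height functions $u_1\neq u_2$ have the same boundary trace on $\partial\Omega_{xy}$, hence $u_1-u_2$ is nonconstant and $\nabla u_1\neq\nabla u_2$ on a set of positive measure; applying the pointwise strict convexity of $(\alpha,\beta)\mapsto g(\norm{(\alpha,\beta)}^2)$ on that set and the non-strict inequality elsewhere, and integrating, yields $I_{\mathrm{APS}}(tu_1+(1-t)u_2)<t\,I_{\mathrm{APS}}(u_1)+(1-t)\,I_{\mathrm{APS}}(u_2)$ for $t\in(0,1)$. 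A strictly convex functional on a convex set has at most one critical point, and any critical point is automatically its unique global minimizer; since an APS-equilibrium is by definition a critical point of $I_{\mathrm{APS}}$ over the class of APS-deformations, there can be at most one APS-equilibrium and it minimizes the energy among all APS-deformations with the given boundary data. APS-admissibility (Theorem~\ref{theoremKnowles}) then guarantees that this APS-equilibrium is moreover a global equilibrium, i.e.\ the anti-plane shear solution of the full problem; existence of an APS-equilibrium is not asserted by the statement and, if desired, would be established separately by the direct method under suitable growth/coercivity hypotheses.

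The argument is essentially routine once strict APS-convexity is in place; the only step needing some care is the propagation of the strict inequality from the integrand to the functional, which rests on the elementary fact that two admissible height functions sharing the same trace cannot have almost-everywhere equal gradients without coinciding. Note also that strictness is used only along the APS rank-one direction $(0,0,1)^T\otimes\eta$, so a hypothesis weaker than full strict rank-one convexity — strict convexity merely in that one direction, cf.\ the remark following Lemma~\ref{lemma:Rank1APS} — would already suffice for the conclusion.
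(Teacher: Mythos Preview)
The paper provides no separate proof for this corollary; it is stated without proof as an immediate consequence of Lemma~\ref{lemma:Rank1APS}, with the justification ``due to APS-convexity'' already built into the statement. Your argument correctly supplies the details the paper leaves implicit---upgrading Lemma~\ref{lemma:Rank1APS} to the strict case, passing to strict convexity of the restricted functional~\eqref{eq:VariationalIII}, and reading off uniqueness and minimality---and is exactly the reasoning the paper intends. Your closing observations (that APS-admissibility is what promotes the APS-equilibrium to a solution of the full problem, and that strictness is only needed along the single rank-one direction $(0,0,1)^T\otimes\eta$) are apt and go slightly beyond what the paper spells out.
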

\begin{remark}
	As demonstrated by Lemma \ref{lemma:Rank1APS}, APS-convexity is not a highly restrictive condition for physically viable elastic energy functions. Moreover, it is remarkable that APS-convexity is equivalent to the monotonicity of the Cauchy shear stress in simple shear, see Appendix \ref{lemma:CauchyMonotonicity}.
\end{remark}
\begin{remark}
\label{remark:pucciCounterexample}
	In a recent article by Pucci et al.\ \cite[eq.(7.1)]{pucci2015determination} it is claimed that in the compressible case, Knowles' \quoteref{ellipticity condition \quoteesc{\ldots} is a consequence of the empirical inequalities and \quoteesc{compatibility with linear elasticity}}, i.e.\ that the so-called \emph{empirical inequalities} \cite{moon1974interpretation,antman,truesdell1952}
	\begin{align}
		\beta_0&\colonequals\frac{2}{\sqrt{I_3}}\left(I_2\.\frac{\partial W}{\partial I_2}+I_3\.\frac{\partial W}{\partial I_3}\right)\leq 0\,,\qquad\beta_1\colonequals\frac{2}{\sqrt{I_3}}\.\frac{\partial W}{\partial I_1}>0\,,\qquad\beta_{-1}\colonequals-2\sqrt{I_3}\.\frac{\partial W}{\partial I_2}\leq 0\label{eq:empiricalInequalities}
	\end{align}
	 together with the condition of a stress-free reference configuration imply Knowles' ellipticity condition (Remark \ref{lemma:KnowlesEllipticity}), cf.\ Appendix \ref{remark:Pucci}. We show here that for large deformations, this statement is erroneous: Consider the energy function
	\begin{align}
		W(F)=\frac{3\.\mu}{4}\.\alpha\.\big[\,\underbrace{\log(I_1)+\log(I_2)-\log(I_3)}_{=\log(\norm{U}^2)+\log(\norm{U^{-1}}^2)}-2\log(3)\.\big]+\frac{\mu}{2}\.(1-\alpha)\left[I_1+\frac{2}{\sqrt{I_3}}-5\right]\label{eq:PucciEnergy}
	\end{align}
	with $\mu>0$ and $0<\alpha<1$. The first term is isochoric and therefore has bulk modulus $\kappa=0$, the second term ensures positive bulk modulus in the reference state. The empirical inequalities
	\begin{align}
		\beta_0&=\frac{3\.\mu}{4}\.\alpha \left[\frac{2}{\sqrt{I_3}}\left(I_2\.\frac{1}{I_2}-I_3\.\frac{1}{I_3}\right)\right]+\frac{\mu}{2}\.(1-\alpha)\left[\frac{2}{\sqrt{I_3}}\left(0-I_3\.\frac{1}{I_3^{3/2}}\right)\right]= \frac{\mu\.(1-\alpha)}{I_3}<0\,,\\
		\beta_1&=\frac{3\.\mu}{4}\.\alpha\.\frac{2}{\sqrt{I_3}}\.\frac{1}{I_1}+\frac{\mu}{2}\.(1-\alpha)\.\frac{2}{\sqrt{I_3}}\cdot 1>0\,,\qquad\beta_{-1}=-\frac{3\.\mu}{4}\.\alpha\.\sqrt{I_3}\.\frac{2}{I_2}\leq 0
	\end{align}	 
	 are satisfied. Moreover, the energy function is stress-free in the reference configuration $F=\id$, since
	 \begin{align}
	 	\left[\frac{\partial W}{\partial I_1}+2\.\frac{\partial W}{\partial I_2}+\frac{\partial W}{\partial I_3}\right]_{F=\id}=\frac{3\.\mu}{4}\.\alpha\left[\frac{1}{3}+\frac{2}{3}-\frac{1}{1}\right]+\frac{\mu}{2}\.(1-\alpha)\left[1-\frac{1}{1}\right]=0
	 \end{align}
	 and the generated infinitesimal shear modulus can be determined from
	 \begin{align}
	 	\left(\beta_1-\beta_{-1}\right)_{F=\id}=\frac{3\.\mu}{4}\.\alpha\left[\frac{2}{3}+\frac{2}{3}\right]+\frac{\mu}{2}\.(1-\alpha)\left[\frac{2}{1}\right]=\mu\,.
	 \end{align}
	 Recall from Lemma \ref{lemma:KnowlesEllipticity} that Knowles' ellipticity condition is equivalent to the condition \eqref{APS4} of APS-convexity which, in this case, reads
	 \begin{align}
	 	0 &\leq \frac{d^2}{(dR)^2}\.W(3+R^2,3+R^2,1)=\frac{3\.\mu}{4}\.\alpha\.\frac{d^2}{(dR)^2}\left[2\.\log(3+R^2)\right]+\frac{\mu}{2}\.(1-\alpha)\frac{d^2}{(dR)^2}\left[R^2\right]\\
	 	&=\frac{3\.\mu}{2}\.\alpha\.\frac{d}{dR}\left[\frac{2\.R}{3+R^2}\right]+\frac{\mu}{2}\.(1-\alpha)\frac{d}{dR}\left[2R\right]=3\.\mu\.\alpha\left[\frac{3-R^2}{\left(3+R^2\right)^2}\right]+\mu\.(1-\alpha)\,.\notag
	 \end{align}
	  However, if $\frac{8}{9}<\alpha<1$, then there exists an interval where APS-convexity is violated. Therefore, the energy function \eqref{eq:PucciEnergy} with $\frac{8}{9}<\alpha$ is compatible with linear elasticity and satisfies the empirical inequalities \eqref{eq:empiricalInequalities} as well as the condition of a stress-free reference configuration, but does not satisfy Knowles' ellipticity condition, in contradiction to the claim by Pucci et al.\ \cite{pucci2015determination}.
\end{remark}
\subsection{Tension-compression symmetry}
Table \ref{tab:1} shows a number of elastic energy potentials used in nonlinear elasticity theory and their properties regarding APS-convexity. The detailed calculations can be found in \cite{voss2017master}.\par
\hspace{-1.4cm}
\renewcommand{\arraystretch}{1.6}
\begin{tabularx}{1.15\textwidth}{m{0.13\textwidth}|m{0.56\textwidth}|X|X|c|X}
	Name & Energy expression & Rank1-convex & APS-convex & K1 incomp. & K2\par compress.\\
\hline
	vol.+iso. Neo-Hooke\textsuperscript{\cite{Ogden83}}	& $W(F)=\frac{\mu}{2}(I_1I_3^{-\frac{1}{3}}-3)+h(I_3)$		& Yes & Yes & $b=0$  & No\\
\hline
	vol.+iso. Mooney-Rivlin\textsuperscript{\cite{Ogden83}}	& $W(F)=\frac{\mu}{2}\left(\alpha(I_1I_3^{-\frac{1}{3}}-3)+(1-\alpha)(I_2I_3^{-\frac{2}{3}}-3)\right)+h(I_3)$		& Yes & Yes & \mbox{$b=1-\alpha$}  & No\\
\hline
	Blatz-Ko\textsuperscript{\cite{horgan1996remarks}}	& $W(F)=\frac{\mu}{2}\left(I_1+\frac{2}{\sqrt{I_3}}-5\right)$		& Yes & Yes	& $b=0$  & Yes\footnote{A general class of APS-admissible energy functions $W(I_1,I_3)$ can be found in \cite{jiang2001exact}.}\\
\hline
	Veronda-Westman\textsuperscript{\cite{oberai2009linear}}	& $W(F)=\mu\left(\frac{e^{\gamma(I_1-3)}-1}{\gamma}-\frac{I_2-3}{2}\right)+h(I_3)$	& No & Yes & No  & No\\
\hline
	Mihai-Neff\textsuperscript{\cite{mihai2017hyperelastic,neff2017injectivity}}	& $W(F)=\frac{\mu}{2}\left(I_1\,{I_3}^{-\frac{1}{3}}-3\right)+\frac{\widetilde{\mu}}{4}\,(I_1-3)^2+\frac{\kappa}{2}\left({I_3}^\frac{1}{2}-1\right)^2$		& No & Yes	& $b=0$  & \mbox{$\widetilde{\mu}=\frac{\mu}{3}$}\\[1ex]
\hline
	Knowles	& $W(F)=\frac{\mu}{2b}\left(\left[1+\frac{b}{n}\left(I_1\,{I_3}^{-\frac{1}{3}}-3\right)\right]^n-1\right)+\frac{1}{D_1}\left({I_3}^\frac{1}{2}-1\right)^2$	& ? & Yes	& $b=0$  & No\\
\hline
	Bazant	& $W(F)=\|B-B^{-1}\|^2$		& No & Yes & $b=\frac{1}{2}$ & No\\
\hline
	Ciarlet\textsuperscript{\cite{Ciarlet1988}}	& $W(F)=\frac{c_1}{2}\.\norm{F}^2+\frac{c_2}{2}\.\norm{\Cof F}^2+h(\det F)$\newline\hspace*{1.03cm}$=\frac{c_1}{2}\.I_1+\frac{c_2}{2}\.I_2+h(\sqrt{I_3})$	& Yes & Yes	& \mbox{$b=\frac{c_2}{c_1+c_2}$}  & \mbox{$c_2=0$}\\[1ex]
\hline
	SVK\textsuperscript{\cite{Ciarlet1988}}	& $W(F)=\frac{\mu}{4}\,\|C-\id\|^2+\frac{\lambda}{8}\,\text{tr}(C-\id)^2$		& No & Yes & No & —\\
\hline
	4th Order	& $W(F)=\mu\,\tr(E^2)+\frac{A}{2}\,\tr(E^3)+D\,\tr(E^2)^2$		& No & Yes & No & —\\
\hline
	Hencky\textsuperscript{\cite{Hencky1929,neff2016geometry}}	& $W(F)=\mu\|\dev\log V\|^2+\frac{\kappa}{2}\left(\tr(\log V)\right)^2$	& No & No & $b=\frac{1}{2}$ & No\\
\hline
	\mbox{exp-Hencky\textsuperscript{\cite{neff2015exponentiated}}}	& $W(F)=\frac{\mu}{k}e^{k\,\|\dev\log V\|^2}+\frac{\kappa}{2\hat{k}}e^{\hat{k}\left(\tr(\log V)\right)^2}$		& No & Yes	& $b=\frac{1}{2}$  & No\\
\hline
	Martin-Neff	& $W(F)=\frac{\|F\|^3}{\det(F)}+\det(F)\,\|F^{-1}\|^3$		& Yes & Yes & $b=\frac{1}{2}$ & No\\
\hline
Model\textsuperscript{\cite{voss2017master}}	& $W(F)=c_1\left(\sqrt{I_1}+\sqrt{I_2}+\frac{\sqrt{3}}{\sqrt{I_3}}-3\sqrt{3}\right)$\par	No viable approximation to linear elasticity in $F=\id\,.$	& Yes & Yes & $b=\frac{1}{2}$ & Yes
\end{tabularx}
\captionof{table}{An overview of APS-related properties for several important energy functions.}\label{tab:1}
\vspace{0.2cm}
\renewcommand{\arraystretch}{0.625}

Note that an APS-admissible energy in the incompressible case only has to satisfy condition (K1), whereas an APS-admissible energy for the general compressible case must also fulfill condition (K2). A still unsolved problem is to find a compressible viable energy function which is APS-admissible but depends nonlinearly on $I_2$. It is noticeable in Table \ref{tab:1} that many energy functions satisfy condition \eqref{K1} with $b=0$ or $b=\frac{1}{2}$; the former case can be easily explained by the independence from the second invariant.

\begin{lemma}
	Every isotropic energy function $W(F)$ which can be expressed in the form $W(F)=W(I_1,I_3)$, i.e.\ which does not depend on the second invariant $I_2$, satisfies condition \eqref{K1} with $b=0$.
\end{lemma}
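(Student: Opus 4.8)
The plan is to simply unwind the definition of condition \eqref{K1} and observe that the hypothesis annihilates one of its two terms outright. Condition \eqref{K1} asks for the existence of a constant $b\in\mathbb{R}$ such that
\[
    b\,\frac{\partial W}{\partial I_1}(I_1,I_2,I_3)+(b-1)\,\frac{\partial W}{\partial I_2}(I_1,I_2,I_3)=0
\]
holds for all $I_1=I_2\geq 3$ with $I_3=1$. If $W$ depends only on $I_1$ and $I_3$, then $\frac{\partial W}{\partial I_2}\equiv 0$ identically (in fact on all of the invariant domain, not merely on the diagonal set $I_1=I_2$), so the second summand vanishes regardless of the choice of $b$.

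First I would therefore set $b=0$; then the first summand also vanishes, since it carries the prefactor $b=0$, and the left-hand side of \eqref{K1} is identically zero. Hence \eqref{K1} holds with $b=0$, which is exactly the claim. There is no genuine obstacle: the only point requiring a moment's care is that \eqref{K1} is demanded only on the subset $\{I_1=I_2\geq 3,\ I_3=1\}$, but since the stronger fact $\frac{\partial W}{\partial I_2}\equiv 0$ holds everywhere, this restriction is immaterial. Equivalently, in the notation of \eqref{notation:GH} one has $G(I_1,I_2)=2\,\frac{\partial W}{\partial I_2}(I_1,I_2,1)\equiv 0$, so the relation $b\,H=G$ is satisfied with $b=0$ while $H=2\,\frac{\partial W}{\partial I_1}(\,\cdot\,,\,\cdot\,,1)$ need not vanish — consistent with the numerous table entries whose value is $b=0$.
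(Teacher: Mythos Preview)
Your proof is correct and follows essentially the same approach as the paper: both observe that setting $b=0$ reduces \eqref{K1} to $\frac{\partial W}{\partial I_2}=0$, which holds trivially when $W$ does not depend on $I_2$. The paper's proof is a single sentence to this effect; your additional remarks about the restriction to $I_1=I_2\geq 3$, $I_3=1$ and the equivalent formulation $b\,H=G$ are accurate but not needed.
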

\begin{proof}
Condition \eqref{K1} with $b=0$ can be simplified to $\frac{\partial W}{\partial I_2}=0$, which is trivially fulfilled for every isotropic energy function of the type $W(F)=W(I_1,I_3)$.
\end{proof}
The special case $b=\frac{1}{2}$, on the other hand, shows a more interesting relation to the so-called \emph{tension-compression symmetry} of an energy.
\begin{definition}
	An energy function $W(F)$ is called \emph{tension-compression symmetric} if $W(F)=W(F^{-1})$ for all $F\in\GL^+(3)$.
\end{definition}
\begin{lemma}\label{lemma:tensionCompressionSymmetric}
	An isotropic tension-compression symmetric energy function $W$ is invariant under permutation of the two invariants $I_1$ and $I_2$ under the constraint of incompressiblity, i.e.\ $W(I_1,I_2,1)=W(I_2,I_1,1)$.
\end{lemma}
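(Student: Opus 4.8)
The plan is to exploit the isotropic representation of $W$ together with the transformation behaviour of the principal invariants under matrix inversion. Since $W$ is isotropic and objective, it admits a representation $W(F)=\widehat W(I_1,I_2,I_3)$ in terms of the invariants \eqref{Inv} of $B=FF^T$. The first step is to determine the invariants of the left Cauchy--Green tensor associated with $F^{-1}$. Because $(F^{-1})(F^{-1})^T=(F^TF)^{-1}=C^{-1}$ has the same spectrum as $B^{-1}$, it suffices to compute the invariants of $B^{-1}$: if $\lambda_1,\lambda_2,\lambda_3$ denote the eigenvalues of $B$, then $B^{-1}$ has eigenvalues $\lambda_i^{-1}$, and evaluating the elementary symmetric polynomials gives
\begin{align*}
	I_1(B^{-1})=\frac{I_2}{I_3}\,,\qquad I_2(B^{-1})=\frac{I_1}{I_3}\,,\qquad I_3(B^{-1})=\frac{1}{I_3}\,.
\end{align*}
Equivalently, these identities follow from $\Cof(B)^T=I_3\,B^{-1}$ together with $\tr\Cof(B)=I_2$ and $\tr\Cof(B^{-1})=I_1/I_3$.

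The second step is to translate tension--compression symmetry into a functional equation for $\widehat W$. For any $F\in\GL^+(3)$ one has $W(F^{-1})=\widehat W\big(I_1(C^{-1}),I_2(C^{-1}),I_3(C^{-1})\big)=\widehat W\big(I_2/I_3,\,I_1/I_3,\,1/I_3\big)$, so the hypothesis $W(F)=W(F^{-1})$ is equivalent to
\begin{align*}
	\widehat W(I_1,I_2,I_3)=\widehat W\!\left(\frac{I_2}{I_3},\,\frac{I_1}{I_3},\,\frac{1}{I_3}\right)
\end{align*}
for every invariant triple $(I_1,I_2,I_3)$ attained by some $F\in\GL^+(3)$.

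The third and final step is simply to specialize this identity to the incompressible case $I_3=1$, which yields $\widehat W(I_1,I_2,1)=\widehat W(I_2,I_1,1)$, i.e.\ exactly the asserted permutation invariance (this holds for all admissible pairs, namely $I_1=I_2\geq 3$ on the incompressible manifold). The only point needing a little care is the verification of the inversion formulas for the invariants and the observation that $(F^{-1})(F^{-1})^T$ shares its spectrum with $B^{-1}$, rather than trying to manipulate $F^{-1}(F^{-1})^T$ as an unrelated symmetric tensor; beyond this bookkeeping there is no genuine obstacle, since isotropy reduces everything to a statement about eigenvalues.
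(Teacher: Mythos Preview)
Your argument is correct and essentially identical to the paper's: compute the invariants of $B^{-1}$ (you via eigenvalues, the paper via trace and cofactor identities), deduce $\widehat W(I_1,I_2,I_3)=\widehat W(I_2/I_3,\,I_1/I_3,\,1/I_3)$ from tension--compression symmetry, and set $I_3=1$. One minor slip: your parenthetical claim that the admissible invariant pairs on the incompressible manifold are precisely those with $I_1=I_2\geq 3$ is false (e.g.\ eigenvalues $2,2,\tfrac14$ of $B$ give $I_3=1$, $I_1=\tfrac{17}{4}\neq I_2=5$), but this remark plays no role in your argument.
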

\begin{proof}
	Let $I_1'=I_1(B^{-1})\,,\quad I_2'=I_2(B^{-1})\,,\quad I_3'=I_3(B^{-1})\,.$ Then
	\begin{align}
		I_1'&=\tr(B^{-1})=\tr\Big(\frac{\det(B)}{\det(B)}\,B^{-1}\Big)=\frac{1}{\det(B)}\tr(\det(B)\,B^{-T})=\frac{\tr(\Cof(B))}{\det(B)}=\frac{I_2}{I_3}\,,\\[1ex]
		I_2'&=\tr(\Cof(B^{-1}))=\tr(\det(B^{-1})\,(B^{-1})^{-T})=\det(B^{-1})\,\tr(B^T)=\frac{\tr(B)}{\det(B)}=\frac{I_1}{I_3}\,,\\[1ex]
		I_3'&=\det(B^{-1})=\frac{1}{\det(B)}=\frac{1}{I_3}\,.
	\end{align}
	Therefore, tension-compression-symmetry implies $W(I_1,I_2,I_3)=W(I_1',I_2',I_3')=W(\frac{I_2}{I_3},\frac{I_1}{I_3},\frac{1}{I_3})$ and thus, in particular, $W(I_1,I_2,1)=W\left(\frac{I_2}{1},\frac{I_1}{1},\frac{1}{1}\right)=W(I_2,I_1,1)$.
\end{proof}
\begin{theorem}
	Every isotropic tension-compression-symmetric energy function $W(F)$ satisfies condition \eqref{K1} with $b=\frac{1}{2}$.
\end{theorem}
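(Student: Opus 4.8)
The plan is to rewrite condition \eqref{K1} for the specific value $b=\tfrac12$ as a symmetry statement on the first-order derivatives of $W$, and then to obtain that statement by differentiating the permutation invariance provided by Lemma \ref{lemma:tensionCompressionSymmetric}.

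First I would substitute $b=\tfrac12$ into \eqref{K1}. Since
\begin{align*}
	\tfrac12\,\frac{\partial W}{\partial I_1}(I_1,I_2,I_3)+\bigl(\tfrac12-1\bigr)\,\frac{\partial W}{\partial I_2}(I_1,I_2,I_3)=\tfrac12\left(\frac{\partial W}{\partial I_1}(I_1,I_2,I_3)-\frac{\partial W}{\partial I_2}(I_1,I_2,I_3)\right)\,,
\end{align*}
condition \eqref{K1} with $b=\tfrac12$ is equivalent to the equality $\frac{\partial W}{\partial I_1}=\frac{\partial W}{\partial I_2}$ along the diagonal $I_1=I_2\geq3$, $I_3=1$. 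Note that this only involves the derivatives of $W$ in its first two slots, so the frozen value $I_3=1$ causes no difficulty.

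Next I would set $f(s,t)\colonequals W(s,t,1)$. Lemma \ref{lemma:tensionCompressionSymmetric} states that tension-compression symmetry implies $W(I_1,I_2,1)=W(I_2,I_1,1)$, i.e.\ $f(s,t)=f(t,s)$ for all admissible $s,t\geq3$. Differentiating this identity with respect to $s$ (with $t$ held fixed) and applying the chain rule on the right-hand side yields $\partial_1 f(s,t)=\partial_2 f(t,s)$; evaluating at $s=t$ gives $\partial_1 f(t,t)=\partial_2 f(t,t)$, that is, $\frac{\partial W}{\partial I_1}(t,t,1)=\frac{\partial W}{\partial I_2}(t,t,1)$ for every $t\geq3$. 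By the first step, this is precisely \eqref{K1} with $b=\tfrac12$. In the incompressible setting one reads the same computation with $W=W(I_1,I_2)$ and the third slot omitted.

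The argument is essentially immediate once Lemma \ref{lemma:tensionCompressionSymmetric} is in hand, so there is no serious obstacle; the only things to keep track of are that $W$ be differentiable in a neighbourhood of the diagonal in the $(I_1,I_2)$-plane, which is covered by the standing smoothness assumption on the energy, and that the chain-rule step be carried out carefully so that one genuinely lands on the diagonal $I_1=I_2$ and not on an off-diagonal relation.
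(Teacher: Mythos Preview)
Your proof is correct and follows essentially the same approach as the paper: both restate \eqref{K1} with $b=\tfrac12$ as the equality $\partial_{I_1}W=\partial_{I_2}W$ on the diagonal $I_1=I_2$, $I_3=1$, and then obtain this by differentiating the permutation symmetry $W(I_1,I_2,1)=W(I_2,I_1,1)$ from Lemma~\ref{lemma:tensionCompressionSymmetric} and evaluating at $I_1=I_2$. Your introduction of $f(s,t)=W(s,t,1)$ makes the chain-rule step slightly more explicit than the paper's one-line derivative computation, but the argument is the same.
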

\begin{proof}
	The condition \eqref{K1} with $b=\frac{1}{2}$ can be restated as
	\[
		\frac{1}{2}\,\frac{\partial W}{\partial I_1}(I_1,I_1,1)+\Big(\frac{1}{2}-1\Big)\frac{\partial W}{\partial I_2}(I_1,I_1,1)=0
		\quad\iff\quad
		\frac{\partial W}{\partial I_1}(I_1,I_1,1)=\frac{\partial W}{\partial I_2}(I_1,I_1,1)
	\]
	for all $I_1\geq3$, and for tension-compression symmetric $W$ we find
	\[
		\frac{\partial W}{\partial I_1}(I_1,I_1,1)=\left.\frac{d}{dt}W(t,I_1,1)\right|_{t=I_1} = \left.\frac{d}{dt}W(I_1,t,1)\right|_{t=I_1}=\frac{\partial W}{\partial I_2}(I_1,I_1,1)
	\]
	due to Lemma \ref{lemma:tensionCompressionSymmetric}.
\end{proof}
Coming back to Table \ref{tab:1}, we observe that no energy function which satisfies condition \eqref{K1} with $b=\frac{1}{2}$ also fulfills the second condition \eqref{K2}. Therefore, we hypothesize that APS-admissibility is not a reasonable characteristic for physically motivated compressible energy functions.
%
%
%
%
\section{The constrained equilibrium approach}\label{sec:constrained-minimization}
By testing several examples, we are led to believe that most viable energy functions in compressible nonlinear elasticity are \emph{not} APS-admissible. Therefore, in general, APS-boundary conditions do \emph{not} necessarily lead to an APS-deformation of the whole body. Nevertheless, it is possible to compute the energetically optimal APS-deformation by minimization only over the class of APS-functions:
\begin{align}
	I(\varphi)=\int_\Omega W(\nabla\varphi)\,\dx\,\longrightarrow\,\underset{\varphi \in\mathcal{APS}}{\min}\,.\label{AnsatzRW}
\end{align}
An equilibrium of the corresponding Euler-Lagrange equations of \eqref{AnsatzRW} (with respect to the restriction of the energy functional to the class of APS-deformations) is called APS-equilibrium and does not have to be stationary in the global sense \eqref{eq:VariationAPS}. As emphasized by Saccomandi \cite{saccomandi2016dy}, this approach was chosen by Gao \cite{gao2015remarks,gao2017remarks,gao2015duality,gao2016analytical}\footnote{\citet{gao2014}:\enquote{ [\ldots] the equilibrium equation [\ldots] has just one non-trivial component [namely equation (III)].} Gao claims that Knowles' condition \eqref{K1} is automatically satisfied for every elastic energy function with $b=0$, which is clearly not the case (Table \ref{tab:1}).}: starting with
\begin{align}
	I(u)=\displaystyle\int_\Omega \mathcal{W}(3+\|\nabla u\|^2)\,\dx\,\longrightarrow\,\min\,,
\end{align}
where we employ the same notation\footnote{For APS-deformations, $I_1=I_2=3+\|\nabla u\|^2$ and $I_3=1$.} $W(I_1,I_1,1)=\mathcal{W}(I_1)$ as before, we obtain the Euler-Lagrange equation $\div(\mathcal{W}'(3+\|\nabla u\|^2)\.\nabla u)=0$ for stationarity within the class of APS-deformations, which is equivalent to equation \eqref{ELAPS_3} from the full equilibrium approach. 
\begin{corollary}
	APS-Convexity of the energy function $W(F)$ ensures the existence of a unique APS-equilibrium which is a global energy minimizer (among the class of APS-deformations).
\end{corollary}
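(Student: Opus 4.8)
The plan is to reduce this corollary to the two ingredients already established in the excerpt: the characterization of APS-equilibria as solutions of equation \eqref{ELAPS_3}, and the fact that APS-convexity makes the reduced functional convex. First I would recall that, within the class $\mathcal{APS}$, a deformation $\varphi$ is parametrized by a single scalar field $u(x_1,x_2)$, and the energy reduces to $I(u)=\int_\Omega \mathcal{W}(3+\|\nabla u\|^2)\,\dx = \int_\Omega g(\|\nabla u\|^2)\,\dx$ with $g(x)=W(3+x,3+x,1)$. The Euler–Lagrange equation of this restricted functional is exactly $\div(\mathcal{W}'(3+\|\nabla u\|^2)\nabla u)=0$, i.e.\ equation \eqref{ELAPS_3}, so APS-equilibria are precisely its critical points.

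Next I would invoke the definition of APS-convexity together with the Remark identifying it with convexity of $(\alpha,\beta)\mapsto g(\|(\alpha,\beta)\|^2)$. This means the integrand $F\mapsto W(F)$, restricted to the affine-parametrized set $\mathcal{APS}$, is a convex function of $\nabla u$; equivalently $u\mapsto I(u)$ is convex on the relevant affine Dirichlet class (fixed boundary data plus $W^{1,2}$, say). A convex functional on a convex admissible set has the property that every critical point is a global minimizer, which gives the ``global energy minimizer among APS-deformations'' part. For uniqueness I would use that, under the standing assumption that $W$ is minimal at $F=\id$, Theorem~\ref{LemmaAPS2} / Lemma~\ref{lemma:gConvex} show $g$ is monotone increasing and convex, so $g(\|(\alpha,\beta)\|^2)$ is \emph{strictly} convex in the radial variable and convex overall; more directly, APS-convexity together with the strict monotonicity of $R\mapsto R\,g'(R^2)$ (cf.\ the ellipticity reformulation in Remark~\ref{lemma:KnowlesEllipticity} and Lemma~\ref{lemma:fosdick}, which give $\mathcal{W}'>0$) forces strict convexity of $\nabla u\mapsto g(\|\nabla u\|^2)$ away from degenerate directions, and strict convexity of the Dirichlet energy yields at most one minimizer with given boundary values; two distinct APS-equilibria would both be minimizers of a strictly convex functional, a contradiction.

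The main obstacle I anticipate is the uniqueness claim, and specifically the gap between APS-convexity (plain convexity of $g(\|\cdot\|^2)$) and the \emph{strict} convexity one actually needs to conclude uniqueness of the minimizer: plain convexity alone permits, e.g., $g$ affine, in which case the functional is not strictly convex and uniqueness can fail up to the usual degeneracies. I would resolve this by being careful about hypotheses — either by reading the corollary's ``unique'' as unique-up-to-the-obvious-invariances, or (more likely, matching the authors' intent) by noting that for genuinely nonlinear viable energies the condition \eqref{APS3} holds with strict inequality, giving strict convexity in every direction $\eta\neq 0$ via the rank-one-type computation in the Remark after Lemma~\ref{lemma:Rank1APS} ($W$ is APS$^+$-convex iff $t\mapsto W(F+t\,(0,0,1)^T\otimes\eta)$ is convex), and hence a strictly convex Dirichlet functional. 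Everything else — existence of the minimizer (direct method, using $g(x)\ge g(0)$ and coercivity from strict monotone growth), and the equivalence of ``APS-equilibrium'' with ``critical point of the reduced functional'' — is routine given the material already developed in Sections~\ref{sec:free-minimization}--\ref{sec:constrained-minimization}, so I would state those briefly and concentrate the write-up on the convexity/strict-convexity dichotomy.
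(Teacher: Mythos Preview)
Your approach matches the paper's: the corollary is stated there without a formal proof, as an immediate consequence of the convexity of the reduced functional $u\mapsto\int_\Omega g(\|\nabla u\|^2)\,\dx$ established in the preceding material. Your identification of the gap between plain APS-convexity and the \emph{strict} convexity actually needed for uniqueness is exactly right, and the paper handles it the same way you propose --- in the sentence immediately following the corollary it invokes \emph{strict} rank-one convexity $\Rightarrow$ strict APS-convexity $\Rightarrow$ uniqueness, tacitly conceding that the corollary as literally stated (with non-strict APS-convexity) does not by itself force uniqueness.
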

Contrary to Theorem 5 in \cite{gao2017remarks}, we see in Lemma \eqref{lemma:Rank1APS} that strict rank-one convexity implies strict APS-convexity which, in turn, implies uniqueness of the APS-equilibrium.\par
Gao \cite{gao2017remarks} prominently discusses the case where $g(\norm{u}^2)=\mathcal{W}(3+\norm{u}^2)=W(3+\norm{u}^2,3+\norm{u}^2,1)$ is \emph{not} convex. In this case, the existence of a solution to the minimization problem is not clear due to the loss of APS-convexity (see Lemma \ref{lemma:gConvex}), and one needs to resort to just solutions of the Euler-Lagrange equation \eqref{ELAPS_3}; of course, while such solutions may exist, it is by no means obvious why they should satisfy the general equations of equilibrium.
\begin{remark}
	If an energy function is APS-admissible (satisfies \eqref{K1} for incompressible material behavior or \eqref{K1} and \eqref{K2} in the compressible case), then the full and the constrained equilibrium approach provide the same solution.
\end{remark}
%
%
%
%
\section{Finite element simulations}
We consider the deformation of a unit cube $\Omega$ with APS-type Dirichlet boundary conditions on the four lateral sides of the cube, see Figure \ref{fig:BoundaryConditions}.
\begin{figure}[H]
	\hspace{0.12\textwidth}
	\includegraphics[clip, trim=1.5cm 0cm 9cm 0cm,width=0.35\textwidth]{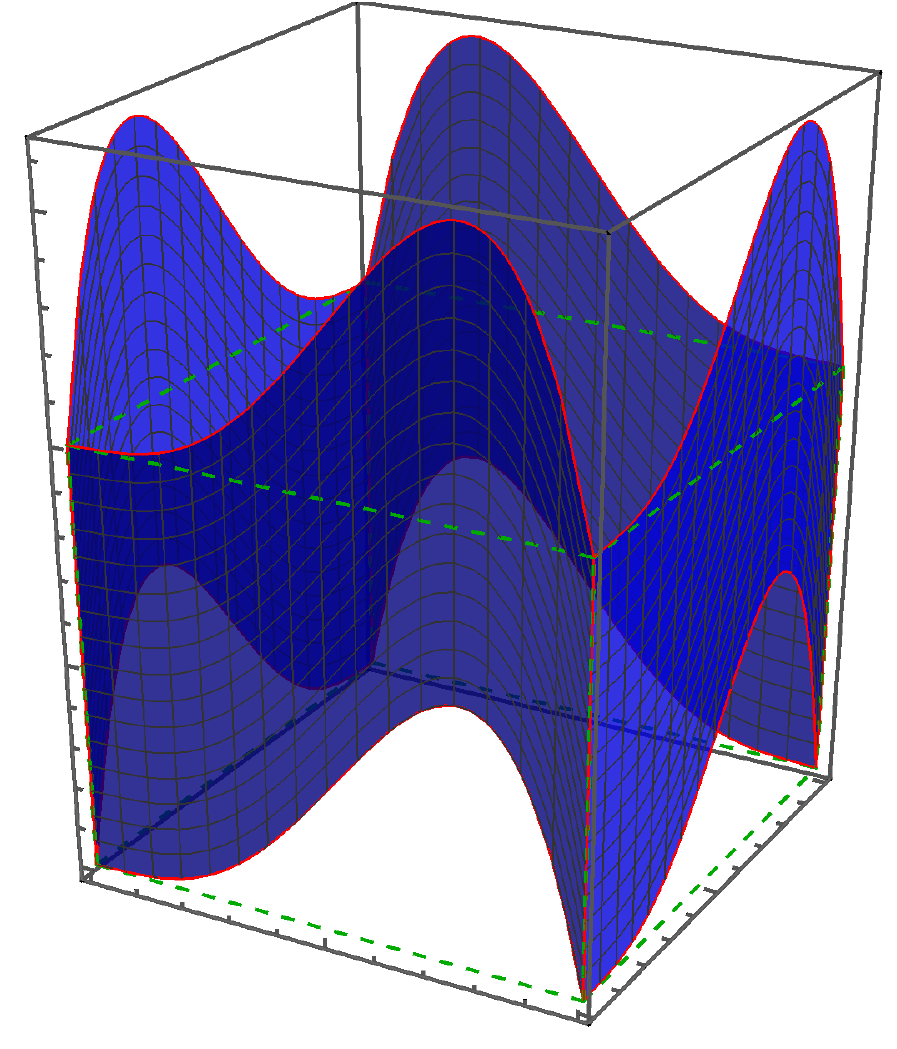}
	\hspace{0.08\textwidth}
	\includegraphics[clip, trim=10.5cm 0cm 0cm 0cm,width=0.35\textwidth]{Randwerte.pdf}
	\caption{\label{fig:BoundaryConditions}(Left) Prescribed APS-boundary conditions. (Right) possible APS-deformation of the top and bottom side of the cube.}
\end{figure}
In order to compare the APS-computations for different constitutive laws, we perform numerical simulations using the finite element system \textsc{Abaqus} \cite{Aba17}, which supports the use of internal models (e.g.\ the compressible Neo-Hooke or the compressible Mooney-Rivlin model) as well as the implementation of custom hyperelastic models via the provided user subroutine \texttt{uhyper}, which requires the user to provide the energy function $W(I_1,I_2,I_3)$ in terms of the invariants as well as its first, second and third derivatives.
For our numerical calculations, we use a grid of $21\times 21\times 21$ nodes. The considered unit cube is discretized by 8-noded linear brick elements with hybrid formulation (C3D8H) in order to get better approximations for the (quasi-)incompressible hyperelastic models.

The APS-boundary conditions as shown in Figure \ref{fig:BoundaryConditions} are realized by the \texttt{disp} subroutine which enables the user to prescribe values for selected node sets and their addressed degree of freedom (DOF) for each iteration increment. Here, we apply the functional value depending on the nodal $x_1,x_2$-position onto the boundary nodes of the unit cube.\par
In the following, we want to visualize the difference between APS-admissible energy functions in the general compressible case, APS-admissibility only for the constraint of incompressibility and an energy function that satisfies neither condition. We start with the incompressible case and choose the Mooney-Rivlin and Veronda-Westman energy functions (see Table \ref{tab:1}). Both are APS-convex, but only the Mooney-Rivlin energy satisfies the condition \eqref{K1} which implies APS-admissibility in the incompressible case.\\
An exact APS-deformation is characterized by an exclusive displacement in $e_3$-direction for every node of the whole body $\Omega$. Therefore, the $e_1$-$e_2$-plane grid-structure of the nodes in the undeformed body $\Omega$ has to be maintained by any deformation in equilibrium for an APS-admissible energy function. We introduce the measure $u_\delta=\sqrt{(\varphi_1(x)-x_1)^2+(\varphi_1(x)-x_2)^2}$ of deviation from an APS-deformation.
\begin{figure}[H]
	\includegraphics[clip, trim=1.7cm 0cm 0cm 0cm,width=0.5\textwidth]{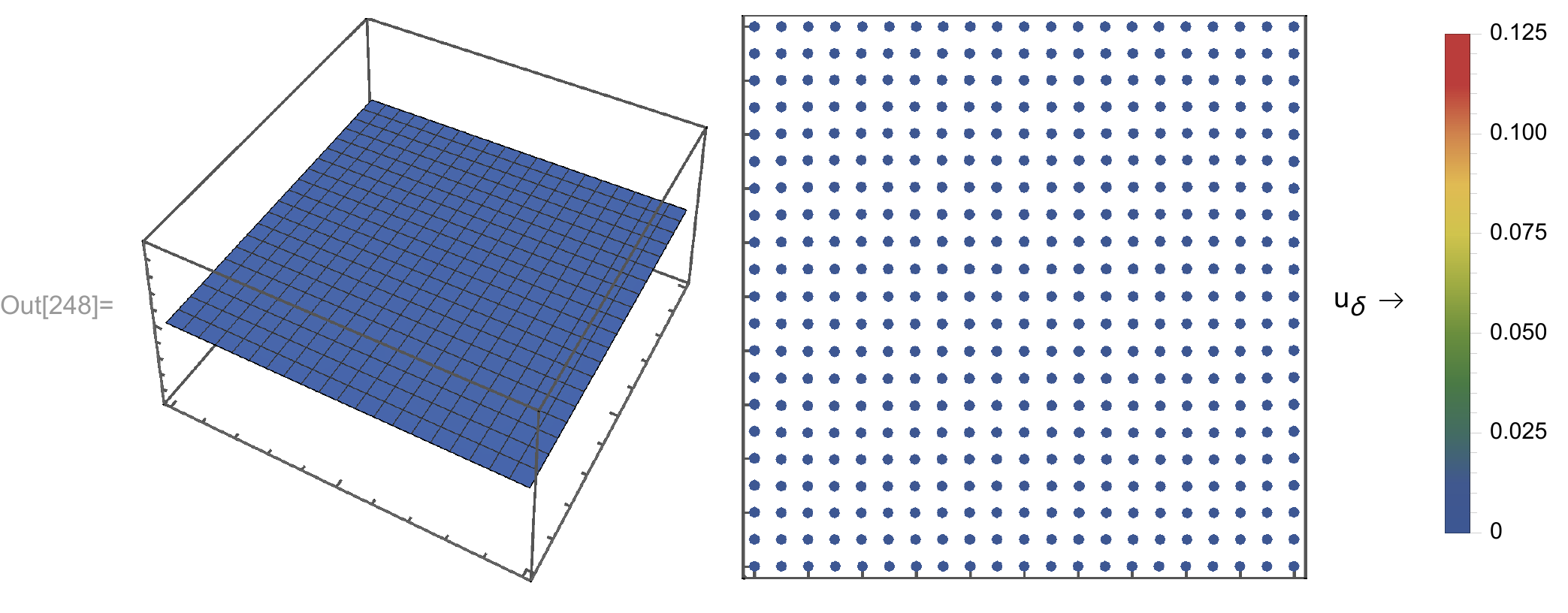}
	\hspace{-8.3cm}
	a)
	\hspace{7.6cm}
	\includegraphics[clip, trim=1.6cm 0cm 0cm 0cm,width=0.5\textwidth]{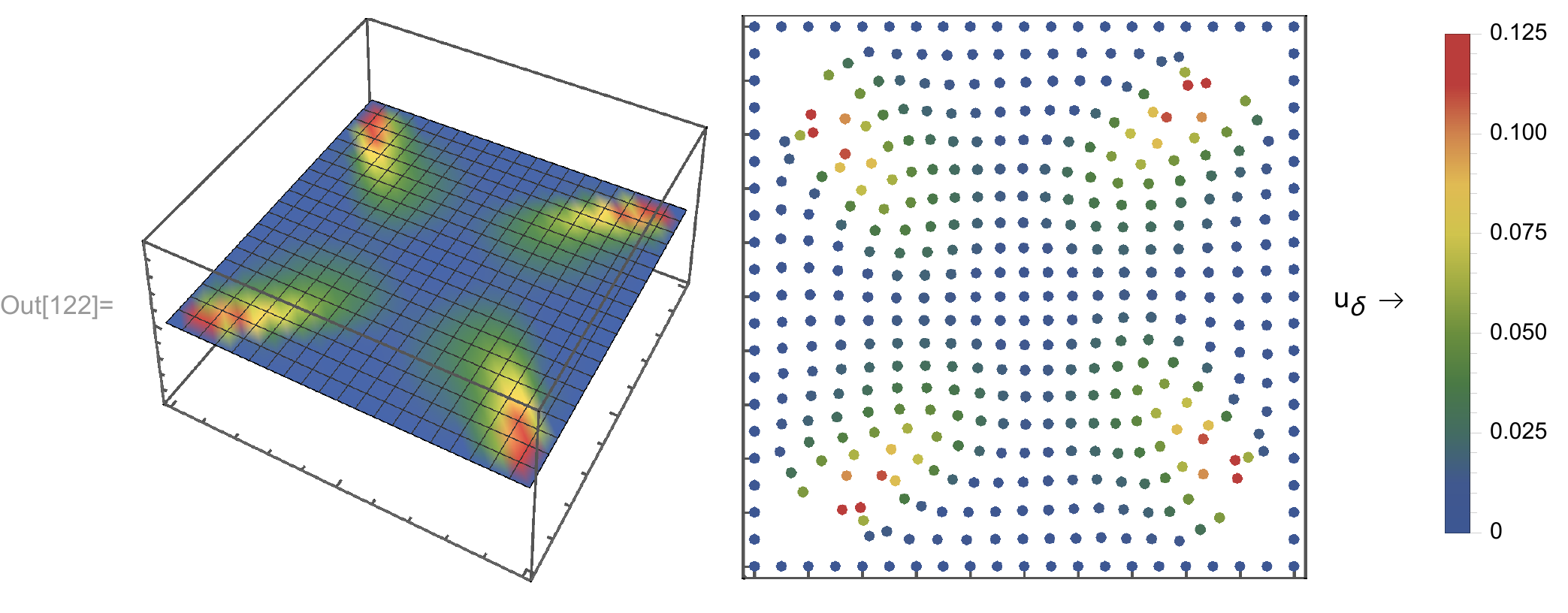}
	\hspace{-8.3cm}
	b)
	\caption{\label{fig:displacement}Visualization of a slice (at $x_3=0.5$) of the deformed square with APS-boundary condition (Figure \ref{fig:BoundaryConditions}) for the Mooney-Rivlin (a) and the Veronda-Westman (b) energy in the quasi-incompressible case (bulk modulus $K\sim 10^5\mu$ shear modulus). The color shows the displacement $u_\delta$ in $x_1$- and $x_2$-direction.}
\end{figure}
The graphics in Figure \ref{fig:displacement}a visualize the deformation induced by the Mooney-Rivlin energy, which is APS-admissible in the incompressible case. The slice of the inside of the cube shows perfect APS-behavior, maintaining the original grid structure. The deformation induced by the non APS-admissible Veronda-Westman energy function is shown in Figure \ref{fig:displacement}b. The deviation to the original grid-structure is more distinct and affects the whole body $\Omega$.\\
For the visualization of APS-admissibility in the compressible case, we again use the Mooney-Rivlin energy and compare it to the APS-admissible Blatz-Ko model (cf.\ Table \ref{tab:1}); note that the Mooney-Rivlin energy is not APS-admissible in the compressible case.
\begin{figure}[H]
	\includegraphics[clip, trim=1.6cm 0cm 0cm 0cm,width=0.5\textwidth]{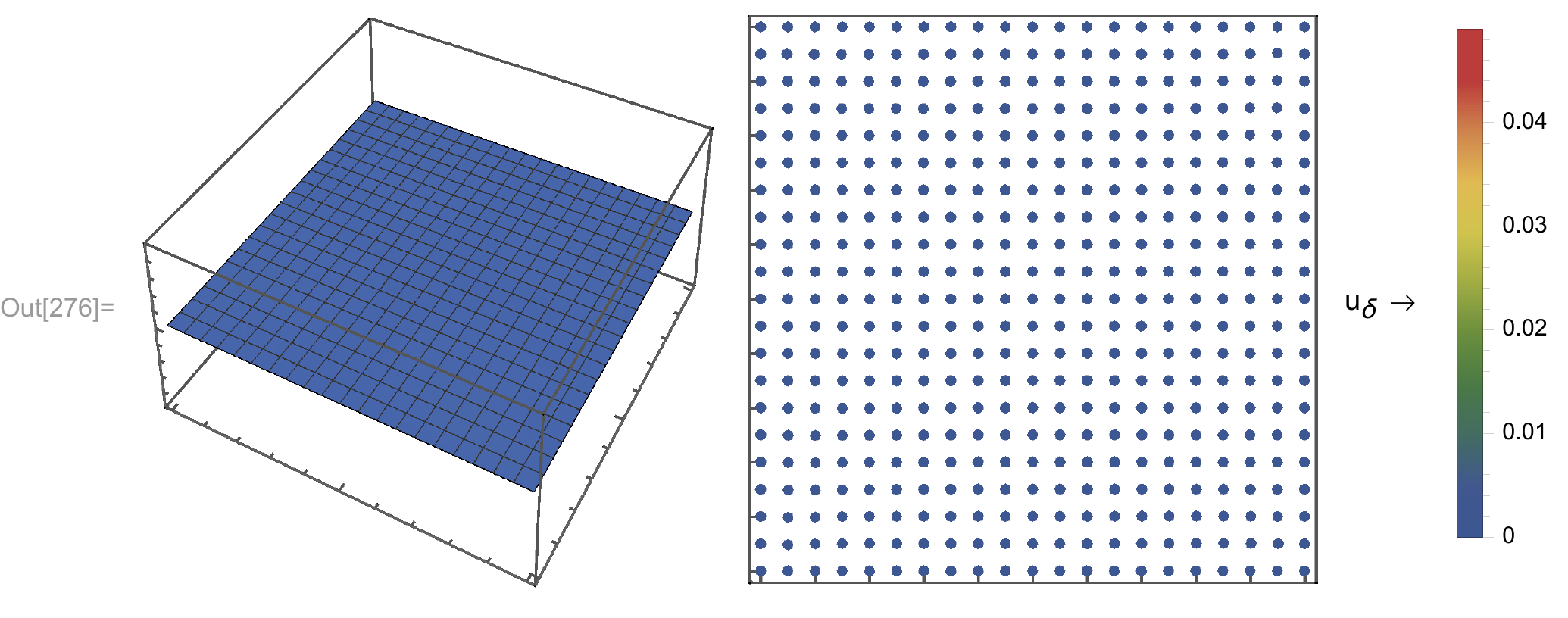}
	\hspace{-8.3cm}
	a)
	\hspace{7.6cm}
	\includegraphics[clip, trim=1.5cm 0cm 0cm 0cm,width=0.5\textwidth]{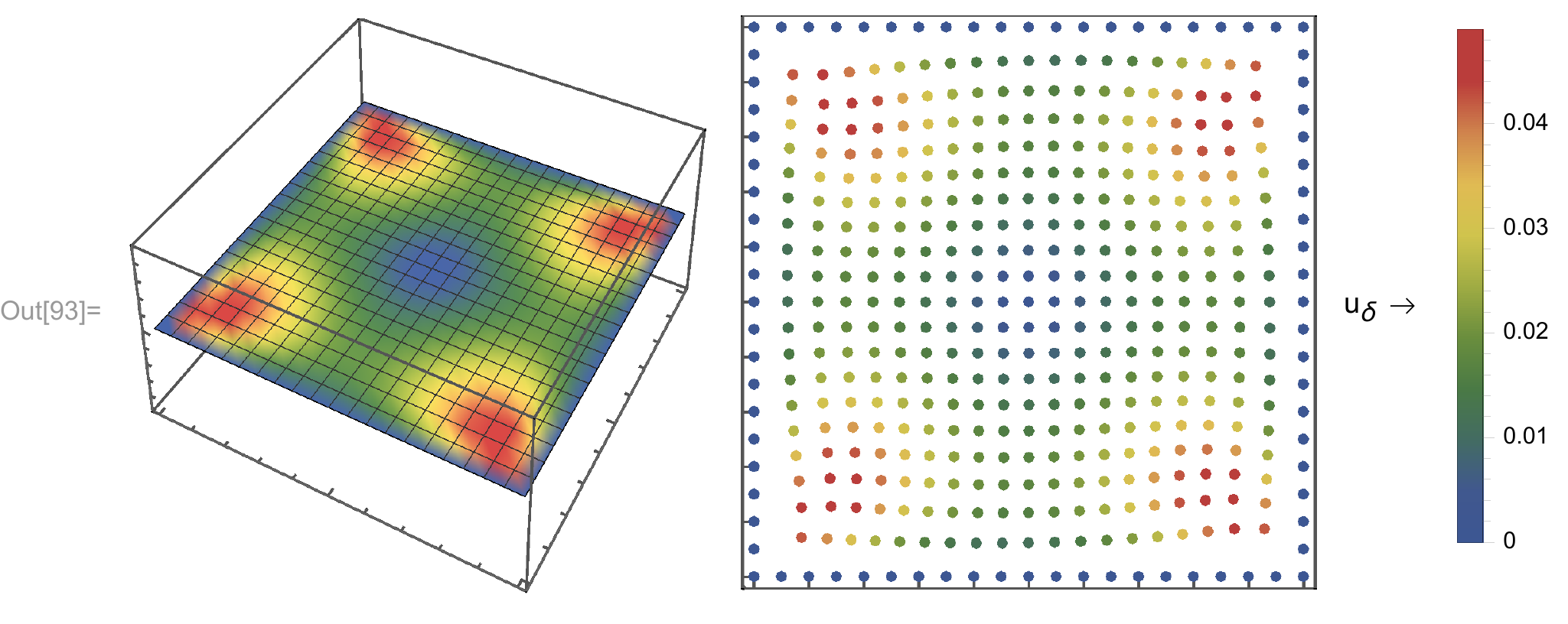}
	\hspace{-8.3cm}
	b)
	\caption{\label{fig:displacement2}Visualization of a slice ($x_3=0.5$) of the deformed square with APS-boundary condition (Figure \ref{fig:BoundaryConditions}) with the Blatz-Ko (a) and the Mooney-Rivlin (b) energy in the compressible case (bulk modulus $K\sim\mu$ shear modulus). The color shows the displacement $u_\delta$ in $x_1$- and $x_2$-direction.}
\end{figure}
Similar to our observation of the quasi-incompressible case, the equilibrium solution for the APS-admissible energy function (Blatz-Ko, Figure \ref{fig:displacement2}a) shows perfect APS-behavior inside the cube. The deformation induced by the Mooney-Rivlin energy, on the other hand, shows more distinguished deviations from an APS-deformation throughout the whole body.
%
%
%
%
\section{Conclusion}
This work elaborates on Knowles' paper \cite{knowles1977note} and the difference between Knowles' (full) and Gao's (constrained) approach. The two conditions \eqref{K1} and \eqref{K2} as discovered by Knowles were derived directly from the Euler-Lagrange-equations of the energy function $W$. The required ellipticity condition in \citet[eq.(19)]{knowles1977note} was identified with the introduced APS-convexity and inferred from to the important concepts of polyconvexity and rank-one convexity. Since the latter condition is a highly desirable property in nonlinear elasticity from a mathematical point of view, the requirement of APS-convexity does not further restrict the class of viable energy functions. Moreover, investigating different elastic energy functions revealed that even a number of commonly used non-rank-one convex energy functions are in fact still APS-convex due to its equivalence to the monotonicity of the Cauchy shear stress in simple shear.\par
Furthermore, it was shown that, contrary to expectations, \eqref{K1} is fulfilled by almost all investigated energy functions; indeed, this condition follows from the physically reasonable requirement of tension-compression-symmetry. Therefore, it is to be expected for incompressible elastic materials to exhibit APS-deformations under given APS-boundary conditions.\par
However, within the context of nonlinear hyperelasticity, the additional APS-admissibility condition \eqref{K2} in the compressible case appears to be satisfied only in the trivial case of energies which do not depend on on the second invariant $I_2$. Note that the only energy function listed in Table \ref{tab:1} that satisfies condition \eqref{K2} and is not independent of $I_2$ is a function without consistency to linear elasticity, unsuitable for mechanical application.\par
By numerical simulations, we were able to visualize the difference between the deformations under APS-type boundary conditions induced by APS-admissible and non-APS-admissible energy functions in the incompressible as well as the compressible case.
%
%
%
\section*{Acknowledgement}
We thank Giuseppe Saccomandi (University of Perugia) and Roger Fosdick (University of Minnesota) for helpful discussions.

%
%
%
%
\footnotesize
\section{References}
\printbibliography[heading=none]
\begin{appendix}
\section{Appendix}
Recall that in the isotropic case, the Cauchy-stress tensor can always be expressed in the form
\begin{align}
	\sigma=\beta_0\.\id+\beta_1\.B+\beta_{-1}\.B^{-1}
\end{align}	
with scalar-valued functions $\beta_i$ depending on the invariants of $B$. In the hyperelastic isotropic case, $\beta_0$, $\beta_1$ and $\beta_{-1}$ are given by
\begin{align}
	\beta_0=\frac{2}{\sqrt{I_3}}\left(I_2\.\frac{\partial W}{\partial I_2}+I_3\.\frac{\partial W}{\partial I_3}\right)\,,\qquad\beta_1=\frac{2}{\sqrt{I_3}}\.\frac{\partial W}{\partial I_1}\,,\qquad\beta_{-1}=-2\sqrt{I_3}\.\frac{\partial W}{\partial I_2}\,.\label{eq:CauchyHyper}
\end{align}

\begin{lemma}\label{lemma:CauchyMonotonicity}
	Let $\varphi\col\Omega\to\R$, $\varphi(x)=(x_1+\gamma\.x_2,x_2,x_3)$ be a simple shear deformation, with $\gamma\in\R$ denoting the amount of shear. Then the Cauchy shear stress $\sigma_{12}$ of an arbitrary isotropic energy function $W(I_1,I_2,I_3)$ is monotone as a scalar-valued function depending on the amount of shear for positive $\gamma$ \emph{if and only if} $W$ is APS-convex.
\end{lemma}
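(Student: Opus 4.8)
The plan is to funnel the whole statement through the scalar function $g(x)\colonequals W(3+x,3+x,1)$ from \eqref{eq:EulerLagrangeAPSConvexity}, exploiting the fact that a simple shear realizes exactly the same invariant configuration $I_1=I_2=3+\gamma^2$, $I_3=1$ as an anti-plane shear deformation.

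First I would record the kinematics: for $\varphi(x)=(x_1+\gamma\,x_2,x_2,x_3)$ one has $F=\id+\gamma\,e_1\otimes e_2$, so that $I_1=I_2=3+\gamma^2$ and $I_3=1$, and
\[
	B=\left(\begin{matrix}1+\gamma^2 & \gamma & 0\\ \gamma & 1 & 0\\ 0 & 0 & 1\end{matrix}\right),\qquad
	B^{-1}=\left(\begin{matrix}1 & -\gamma & 0\\ -\gamma & 1+\gamma^2 & 0\\ 0 & 0 & 1\end{matrix}\right)\,.
\]
Reading the $(1,2)$-entry off the isotropic representation $\sigma=\beta_0\,\id+\beta_1\,B+\beta_{-1}\,B^{-1}$ and inserting the hyperelastic expressions \eqref{eq:CauchyHyper} for $\beta_1$ and $\beta_{-1}$ --- at $I_3=1$ these give $\beta_1-\beta_{-1}=2\,\frac{\partial W}{\partial I_1}+2\,\frac{\partial W}{\partial I_2}$ --- one obtains
\[
	\sigma_{12}(\gamma)=\gamma\,(\beta_1-\beta_{-1})=\gamma\,H(3+\gamma^2,3+\gamma^2)=2\,\gamma\,g'(\gamma^2)=\frac{d}{d\gamma}\,g(\gamma^2)\,,
\]
using $H(3+x,3+x)=2\,g'(x)$, which follows from \eqref{notation:GH} and the chain rule applied to $g(x)=W(3+x,3+x,1)$. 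The identity $\sigma_{12}(\gamma)=\frac{d}{d\gamma}\,g(\gamma^2)$ --- the Cauchy shear stress in simple shear is precisely the $\gamma$-derivative of the energy density along the shear path --- is the conceptual core of the argument.

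Granting this identity, the equivalence is immediate. Since $W$ is smooth, the map $\gamma\mapsto\sigma_{12}(\gamma)$ is monotone non-decreasing on $(0,\infty)$ if and only if $\frac{d}{d\gamma}\sigma_{12}(\gamma)\geq 0$ for all $\gamma>0$, and
\[
	\frac{d}{d\gamma}\,\sigma_{12}(\gamma)=\frac{d^2}{d\gamma^2}\,g(\gamma^2)=\left.\frac{d^2}{(dR)^2}\,W(3+R^2,3+R^2,1)\right|_{R=\gamma}\,.
\]
Hence monotonicity of $\sigma_{12}$ on $(0,\infty)$ is \emph{verbatim} condition \eqref{APS4}, which by Theorem \ref{LemmaAPS2} is equivalent to APS-convexity of $W$ (and, via Remark \ref{lemma:KnowlesEllipticity}, to Knowles' ellipticity condition \eqref{APS3}). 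At this step I would invoke the standing assumption that $W$ attains its global minimum at $\id$, as that is the hypothesis underpinning Theorem \ref{LemmaAPS2}; and since $\sigma_{12}(\gamma)=2\,\gamma\,g'(\gamma^2)$ is odd in $\gamma$, restricting attention to positive $\gamma$ loses nothing.

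I do not expect a genuine obstacle: once $\sigma_{12}(\gamma)=\frac{d}{d\gamma}\,g(\gamma^2)$ has been established, the lemma is a one-line consequence of Theorem \ref{LemmaAPS2}. The points that demand a little care are bookkeeping ones --- computing $B^{-1}$ correctly and tracking the sign in $\beta_{-1}=-2\sqrt{I_3}\,\frac{\partial W}{\partial I_2}$, reading ``monotone'' in the intended non-decreasing sense (the physically relevant one; note that $\sigma_{12}(\gamma)=2\,\gamma\,g'(\gamma^2)\geq 0$ for $\gamma\geq 0$ whenever APS-convexity, hence $g'\geq 0$, holds), and confirming that the smoothness already imposed on $W$ suffices to pass freely between ``$\sigma_{12}$ monotone on $(0,\infty)$'' and ``$\frac{d}{d\gamma}\sigma_{12}\geq 0$ on $(0,\infty)$''.
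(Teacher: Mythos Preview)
Your proposal is correct and follows essentially the same route as the paper: compute $B$, $B^{-1}$ and the invariants for simple shear, read off $\sigma_{12}(\gamma)=(\beta_1-\beta_{-1})\,\gamma$, identify this via the hyperelastic formulas as $\frac{d}{d\gamma}\,W(3+\gamma^2,3+\gamma^2,1)$, and conclude that monotonicity of $\sigma_{12}$ is exactly condition \eqref{APS4}. Your packaging through $g$ and $H$ is a notational variant of the paper's direct computation; no substantive difference.
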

\begin{proof}
	We consider the Cauchy-stress tensor for an arbitrary material which is stress-free in the reference configuration:
	\begin{align}
		\sigma=\beta_0\.\id+\beta_1\.B+\beta_{-1}\.B^{-1}\quad\text{with}\quad \beta_0&=\frac{2}{\sqrt{I_3}}\left(I_2\.\frac{\partial W}{\partial I_2}+I_3\.\frac{\partial W}{\partial I_3}\right)\,,\qquad\beta_1=\frac{2}{\sqrt{I_3}}\.\frac{\partial W}{\partial I_1}\,,\qquad\beta_{-1}=-2\sqrt{I_3}\.\frac{\partial W}{\partial I_2}\,.
	\end{align}
	In the case of simple shear we compute \cite[p.41]{beatty2001seven}
	\begin{align}
		\nabla\varphi&=\left(\begin{matrix}1&\gamma&0\\0&1&0\\0&0&1\end{matrix}\right)\,,\qquad B=FF^T=\left(\begin{matrix}1+\gamma^2&\gamma&0\\\gamma&1&0\\0&0&1\end{matrix}	\right)\,,\qquad B^{-1}=\left(\begin{matrix}1&-\gamma&0\\-\gamma&1+\gamma^2&0\\0&0&1\end{matrix}\right)\,,\\
		I_1&=\tr B=3+\gamma^2\,,\qquad I_2=\tr(\Cof B)=\tr\left(\begin{matrix}1&-\gamma&0\\-\gamma&1+\gamma^2&0\\0&0&1\end{matrix}\right)=3+\gamma^2\,,\qquad I_3=\det B=1\,,\\
		\implies\qquad\sigma&=(\beta_0+\beta_1+\beta_{-1})\.\id+\left(\begin{matrix}\beta_1\.\gamma^2&(\beta_1-\beta_{-1})\.\gamma&0\\(\beta_1-\beta_{-1})\.\gamma&\beta_{-1}\.\gamma^2&0\\0&0&0\end{matrix}\right)\,.
	\end{align}
	Therefore, the Cauchy shear stress component $\sigma_{12}$ is a scalar-valued function depending on the amount of shear $\gamma$, given by
	\begin{align}
		\sigma_{12}(\gamma)&=(\beta_1-\beta_{-1})\.\gamma=\gamma\left.\frac{2}{\sqrt{I_3}}\left(\frac{\partial W}{\partial I_1}+I_3\.\frac{\partial W}{\partial I_2}\right)\right|_{I_1=I_2=3+\gamma^2,I_3=1}\\
		&=2\gamma\left.\left(\frac{\partial W}{\partial I_1}+\frac{\partial W}{\partial I_2}\right)\right|_{I_1=I_2=3+\gamma^2,I_3=1}=\frac{d}{d\gamma}\.W(3+\gamma^2,3+\gamma^2,1)\,.
	\end{align}
	The positivity of the Cauchy shear stress is already implied by the (weak) empirical inequalities $\beta_1>0\,,\;\beta_{-1}\leq 0$. The condition for shear-monotonicity is given by
	\begin{align}
		\frac{d}{d\gamma}\.\sigma_{12}(\gamma)=\frac{d^2}{(d\gamma)^2}\.W(3+\gamma^2,3+\gamma^2,1)>0\qquad\qquad\forall\,\gamma\geq
		 0\,,
	\end{align}
	which is equivalent to APS-convexity condition \eqref{APS4} of the energy function $W(I_1,I_2,I_3)\,.$
\end{proof}
\begin{remark}\label{remark:Pucci}
	The empirical inequalities \eqref{eq:empiricalInequalities} state that $\beta_0\leq 0\,,\;\beta_1>0\,,\;\beta_{-1}\leq 0$. In the case of APS-deformations ($I_1=I_2=3+\gamma^2\,,I_3=1$), Pucci et al.\ \cite[eq.(4.3)]{pucci2015determination} obtain the inequality
	\begin{align}
		(I_1-3)^p(h^*)'+q^2h^*>0\,,\qquad\forall\,I_1\geq3\qquad\text{with }\; h^*(I_1)=\left.\beta_1-\beta_{-1}\right|_{I_1=I_2\,,I_3=1}\,,\label{eq:Pucci4.3}
	\end{align}
	\quoteref{where $p,q$ are real numbers such that $p>0$ and $q\neq0$}, by a \quoteref{simple manipulation of the empirical inequalities \quoteesc{\eqref{eq:empiricalInequalities}} and \quoteesc{the stress free reference configuration}}. In \cite[Remark III]{pucci2015determination}, it is pointed out correctly that in the case of $p=1\,,q^2=\frac{1}{2}$ (they erroneously use $q=1$) the resulting constitutive inequality
	\begin{align}
		0<2\left((I_1-3)\.(h^*)'(3+\gamma^2)+\frac{1}{2}\.h^*(3+\gamma^2)\right)=(h^*)'(3+\gamma^2)\cdot 2\.\gamma^2+h^*(3+\gamma^2)=\frac{d}{d\gamma}\left[\gamma\.h^*(3+\gamma^2)\right]
	\end{align}
	is equivalent to APS-convexity by equation \eqref{APS3} with
	\begin{align}
		\left.\left(\frac{\partial W}{\partial I_1}+\frac{\partial W}{\partial I_2}\right)\right|_{I_1=I_2=3+\gamma^2,I_3=1}\overset{\eqref{eq:CauchyHyper}}{=}\left.\beta_1-\beta_{-1}\right|_{I_1=I_2=3+\gamma^2\,,I_3=1}=h^*(3+\gamma^2)\,.
	\end{align}
	We are, however, not able to reproduce a proof of inequality \eqref{eq:Pucci4.3}, see also the counterexample in Remark \ref{remark:pucciCounterexample}.
\end{remark}
\begin{lemma}\label{lemma:NeighborhoodShearMonotonicity}
	Let $W$ be a sufficiently smooth isotropic energy function such that the induced Cauchy stress response satisfies the (weak) empirical inequalities.
	Then for sufficiently small shear deformations (i.e.\ within a neighborhood of the identity $\id$), the Cauchy shear stress is a \emph{monotone} function of the amount of shear.
\end{lemma}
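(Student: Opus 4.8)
The plan is to reduce the assertion to a purely local continuity argument at the reference configuration, exploiting the explicit formula for the Cauchy shear stress already obtained in the proof of Lemma~\ref{lemma:CauchyMonotonicity}. Recall from there that for the simple shear deformation $\varphi(x)=(x_1+\gamma\,x_2,\,x_2,\,x_3)$ one has $\sigma_{12}(\gamma)=(\beta_1-\beta_{-1})\,\gamma=\gamma\,h^*(3+\gamma^2)$, where $h^*(I_1)\colonequals(\beta_1-\beta_{-1})\big|_{I_1=I_2,\,I_3=1}$, and that $\frac{d}{d\gamma}\sigma_{12}(\gamma)=\frac{d^2}{(d\gamma)^2}W(3+\gamma^2,3+\gamma^2,1)$. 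So what I would actually prove is that this second derivative --- the \emph{local} analogue of the APS-convexity condition \eqref{APS4}, which need \emph{not} hold globally, cf.\ Remark~\ref{remark:pucciCounterexample} --- is positive on some interval $[0,\delta)$.

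First I would differentiate $\sigma_{12}(\gamma)=\gamma\,h^*(3+\gamma^2)$ by the product rule to get $\frac{d}{d\gamma}\sigma_{12}(\gamma)=h^*(3+\gamma^2)+2\gamma^2\,(h^*)'(3+\gamma^2)$, which by the assumed smoothness of $W$ is a continuous function of $\gamma$. Evaluating at $\gamma=0$ annihilates the second term and leaves $h^*(3)=(\beta_1-\beta_{-1})\big|_{F=\id}$, i.e.\ the infinitesimal shear modulus $\mu$ generated by $W$. Next I would invoke the weak empirical inequalities \eqref{eq:empiricalInequalities} evaluated at the identity: they give $\beta_1>0$ and $\beta_{-1}\leq0$, hence $h^*(3)=\beta_1-\beta_{-1}\geq\beta_1>0$. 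Continuity of $\gamma\mapsto\frac{d}{d\gamma}\sigma_{12}(\gamma)$ then yields a $\delta>0$ with $\frac{d}{d\gamma}\sigma_{12}(\gamma)>0$ for all $\gamma\in[0,\delta)$, so that $\sigma_{12}$ is strictly increasing on $[0,\delta)$; the case $\gamma\leq0$ follows since $\sigma_{12}$ is odd in $\gamma$.

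I do not expect any serious obstacle here --- the argument is essentially a first-order Taylor expansion around $\id$. The two points that need a little care are that the empirical inequalities must be used in their \emph{strict} form for $\beta_1$ (so that $\mu>0$ rather than merely $\mu\geq0$), and that $W$ be regular enough for $\sigma_{12}$ to be $C^1$ near $\gamma=0$, which is what licenses the passage from positivity at $\gamma=0$ to positivity on a neighbourhood. It is worth emphasising in the write-up how this contrasts with Lemma~\ref{lemma:CauchyMonotonicity}: global monotonicity of the Cauchy shear stress is equivalent to APS-convexity and can genuinely fail (Remark~\ref{remark:pucciCounterexample}), whereas near the reference configuration monotonicity comes for free from the empirical inequalities alone.
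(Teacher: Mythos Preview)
Your proposal is correct and follows essentially the same route as the paper: compute $\sigma_{12}(\gamma)=(\beta_1-\beta_{-1})\,\gamma$, differentiate by the product rule, use the strict empirical inequality $\beta_1>0$ (together with $\beta_{-1}\leq 0$) at $\gamma=0$ to obtain $\frac{d}{d\gamma}\sigma_{12}(0)=\mu>0$, and then invoke smoothness to propagate positivity to a neighbourhood. The only cosmetic difference is that the paper bounds the two summands $h^*(3+\gamma^2)$ and $2\gamma^2(h^*)'(3+\gamma^2)$ separately (via continuity and local Lipschitz boundedness, respectively), whereas you appeal directly to continuity of the full derivative; both arguments are equivalent under the standing assumption that $W$ is sufficiently smooth.
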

\begin{proof}
	In Lemma \ref{lemma:CauchyMonotonicity}, we already computed the Cauchy shear stress corresponding to a simple shear to be $\sigma_{12}(\gamma)=(\beta_1-\beta_{-1})\.\gamma$, with $\gamma\in\R$ denoting the amount of shear.	The monotonicity of this mapping is equivalent to
	\begin{align}
		0<\frac{d}{d\gamma}\sigma_{12}(\gamma)=\frac{d}{d\gamma}\left[(\beta_1(3+\gamma^2)-\beta_{-1}(3+\gamma^2))\.\gamma\right]=\left(\beta_1'(3+\gamma^2)-\beta_{-1}'(3+\gamma^2)\right)2\.\gamma^2+\beta_1(3+\gamma^2)-\beta_{-1}(3+\gamma^2)\,.
	\end{align}
	According to the (weak) empirical inequalities, $\beta_1(3)-\beta_{-1}(3)\equalscolon\mu>0$. Therefore, $\beta_1(3+\gamma^2)-\beta_{-1}(3+\gamma^2)\geq\eps>0$ for sufficiently small $\gamma\in\R$. If $W$ and thus $\beta_1,\beta_{-1}$ are sufficiently smooth, then $\beta_1'-\beta_2'$ is locally Lipschitz-continuous, and thus within a compact neighborhood of $\id$,
	\[
		\frac{d}{d\gamma}\sigma_{12}(\gamma) = \underbrace{\beta_1(3+\gamma^2)-\beta_{-1}(3+\gamma^2)}_{\geq\eps} \,+\, \underbrace{\left(\beta_1'(3+\gamma^2)-\beta_{-1}'(3+\gamma^2)\right)}_{\leq\;\text{const.}} \,\cdot\,2\.\gamma^2>0
	\]
	for every sufficiently small shear deformation, i.e.\ sufficiently small $\gamma$.
\end{proof}
\end{appendix}
\end{document}